\documentclass[11pt,oneside]{article}
\usepackage{color}
\usepackage{xcolor}
\usepackage{psfrag}
\usepackage{epsfig}
\usepackage{graphicx}
\usepackage{amsmath}
\usepackage{amssymb}
\usepackage{amsthm}
\usepackage{enumerate}
\usepackage{cite}
\usepackage[utf8x]{inputenc}

\newtheorem{theorem}{Theorem}[section]

\newtheorem{fact}[theorem]{Fact}

\newtheorem{proposition}[theorem]{Proposition}

\newtheorem{conjecture}[theorem]{Conjecture}
\newtheorem{corollary}[theorem]{Corollary}
\newtheorem{remark}[theorem]{Remark}
\textwidth 165mm \textheight 17cm \evensidemargin -0cm
\oddsidemargin 0cm

\newcommand{\numberset}{\mathbb}

\newcommand{\F}{\numberset{F}}
\renewcommand{\S}{\mathcal{S}}
\renewcommand{\P}{\mathbb{P}}
\newcommand{\X}{\mathcal{X}}

\pagenumbering{arabic}

\begin{document}

	\title{Intersections between the norm-trace curve and some low degree curves}
	\date{}
	\author{Matteo Bonini, Massimiliano Sala}
	
	\maketitle
	
	\begin{abstract}
		In this paper we analyze the intersection between the norm-trace curve over $\F_{q^3}$ and the curves of the form $y=ax^3+bx^2+cx+d$, giving a complete characterization of the intersection between the curve and the parabolas (a=0), as well as sharp bounds for the other cases. This information is used for the determination of the weight distribution of some one-point AG codes arising from the curve.
	\end{abstract}
	
	{\bf Keywords:} Norm-trace curve - AG Code - Weight distribution
	
	{\bf MSC Codes:} 14G50 - 11T71 - 94B27
	
	\section{Introduction}
	Algebraic Geometry (AG codes for short) codes form an important class of error correcting codes; see \cite{Goppa81,Goppa82,S2009}. 
	
	Let $\mathcal{X}$ be an algebraic curve defined over the finite field $\mathbb{F}_q$ of order $q$. The parameters of the AG codes associated with $\mathcal{X}$ strictly depend on some properties of the underlying curve $\mathcal{X}$. In general, curves with many $\mathbb{F}_q$-rational places with respect to their genus give rise to AG codes with good parameters. For this reason, maximal curves, that is, curves attaining the Hasse-Weil upper bound, have been widely investigated in the literature; see for instance \cite{Stichtenoth1988,Tiersma1987,XL2000,YK1992,BMZ2,BMZ1,M2004}.
	
	The determination of the intersection of a given curve $\X$ and low degree curves is often useful for the
	determination of the weight distribution of the AG code arising from $\X$; see \cite{BR2013,BB2018,C2012,MPS2014,MPS2016}.
	
	The norm-trace curve is a natural generalization of the Hermitian curve to any extension field $\F_{q^r}$. It has been widely studied for coding theoretical purposes; see \cite{G2003,BR2013,KL2018,FMT2013,MTT2008,MR2018}.
	
	In this paper, we focus on the intersection between the norm-trace curves and curves of the form $y = ax^3 + bx^2 + cx + d$ over $\F_{q^3}$. We characterize the intersection between the norm-trace curve and parabolas and we provide tools to get sharp bounds in the other cases. To do so we investigate specific irreducible
	surfaces over finite fields. In addition, we partially deduce the weight distribution of the corresponding one-point codes.
	
	\section{Preliminary Results}
	
	Throughout the paper, let $p$ be a  prime and $q=p^m$, where $m$ is a positive integer. Let $\mathbb{F}_q$ be the finite field with $q$ elements. An $[n,k,d]$ linear code $\mathcal{C}$ over $\mathbb{F}_q$ is a $k$-dimensional subspace of $\mathbb{F}_q^n$ with minimum Hamming distance $d$.  Let $A_i$ be the number of codewords with Hamming weight $i$ in $\mathcal{C}$.
	
	\subsection{The norm-trace curve}
	The \emph{norm-trace curve} $\mathcal{X}$ is the plane curve defined over $\F_{q^r}$ by the affine equation 
	\begin{equation}
	\label{eq:NormTrace}
	x^{\frac{q^r-1}{q-1}}=y^{q^{r-1}}+y^{q^{r-2}}+\dots+y^q+y.
	\end{equation}
	The \textit{norm} $\text{N}_{\F_q}^{\F_{q^r}}$ and the \textit{trace} $\text{T}_{\F_q}^{\F_{q^r}}$ are two well-known functions from $\F_{q^r}$ to $\F_q$ such that
	\[
	\text{N}_{\F_q}^{\F_{q^r}}(x)=x^{\frac{q^r-1}{q-1}}=x^{q^{r-1}+q^{r-2}+\dots+q+1}
	\]
	and
	\[
	\text{T}_{\F_q}^{\F_{q^r}}(x)=x^{q^{r-1}}+x^{q^{r-2}}+\dots+x^q+x.
	\]
	When $q$ and $r$ are understood, we will write $\text{N}=\text{N}_{\F_q}^{\F_{q^r}}$ and $\text{T}=\text{T}_{\F_q}^{\F_{q^r}}$.
	
	The equation $x^{\frac{q^r-1}{q-1}}=y^{q^{r-1}}+y^{q^{r-2}}+\dots+y^q+y$ has precisely $q^{2r-1}$ solutions in $\mathbb{A}^2\left(\F_{q^r}\right)$, so the curve $\mathcal{X}$ has $q^{2r-1}+1$ rational points: $q^{2r-1}$ of them correspond to affine places, plus a single place at the infinity $P_{\infty}$.
	
	If $r=2$, $\mathcal{X}$ coincides with  the Hermitian curve and if $r\ge3$ $\mathcal{X}$ is singular in $P_{\infty}$.

	Moreover it is known that its Weierstrass semigroup in $P_{\infty}$ is generated by $\left\langle q^{r-1}, \frac{q^r-1}{q-1}\right\rangle$, see \cite{G2003}.
	
	Our main aim is the study of the intersection between $\mathcal{X}$ and cubics of the form $y=ax^3+bx^2+cx+d$, where $a,b,c,d\in\F_{q^r}$. In particular, we focus on the intersections between $\mathcal{X}$ and parabolas. The case $r=2$ and $a=0$ is completely investigated in \cite{MPS2014,DDK209}, so we deal with the more difficult case $r\ge3$. We set the problem for the general case in Section 3, while in the rest of the paper we concentrate on the solution to the case $r=3$ and $a=0$, obtaining partial results for the case $a\ne0$ in Section 6.
	
	\subsection{Algebraic Geometry Codes}
	
	We introduce here some basic notions on AG codes; for a detailed introduction to this topic, we refer to \cite[Chapter 2]{S2009}.
	
	Let $\mathbb{F}_q$ be the finite field with $q$ elements and $\mathcal{X}$ be a projective, absolutely irreducible, algebraic curve of genus $g$ defined over $\mathbb{F}_q$.
	Let $\mathbb{F}_q(\mathcal{X})$ be the field of rational functions on $\mathcal{X}$ and $\mathcal{X}(\mathbb{F}_q)$ be the set of rational places of $\mathcal{X}$.
	For any divisor $D=\sum_{P\in\mathcal{X}(\overline{\mathbb{F}}_q)}m_P P$ on $\mathcal{X}$, we denote by $v_P(D)$ the valuation $m_P\in\mathbb{Z}$ of $D$ at $P$, and by ${\rm supp}(D)$ the support of $D$; the degree of $D$ is $\deg(D)=\sum_{P\in{\rm supp}(D)} n_P$.
	The Riemann-Roch space $\mathcal{L}(D)$ of an $\mathbb{F}_q$-rational divisor $D$ is the $\mathbb{F}_q$-vector space 
	\[
	\mathcal{L}(D)=\{f\in\F_{q}(\mathcal{X}) \ |\ (f)+D\ge0\}\cup \{0\}.
	\]
	where $(f)=(f)_0-(f)_{\infty}$ denotes the principal divisor of $f$; here, $(f)_0$ and $(f)_\infty$ are respectively the zero divisor and the pole divisor of $f$.
	The $\mathbb{F}_q$-dimension of $\mathcal{L}(D)$ is denoted by $\ell(D)$. It is known that $\mathcal{L}(D)$ is a finite dimensional $\F_q$-vector space and the exact dimension can be computed using the Riemann-Roch theorem. The $\mathbb{F}_q$-dimension of $\mathcal{L}(D)$ is denoted by $\ell(D)$.
	
	Consider now the divisor $D=\sum_{P\in S}P$, $S=\{P_1,\dots,P_n\}\subsetneq\mathcal{X}(\F_{q})$, where all the $P$'s have valuation one. Let $G$ be another $\F_q$-rational divisor such that $\mathrm{supp}(G)\cap \mathrm{supp}(D)=\emptyset$. Consider the evaluation map 
	
	\[
	\mathrm{ev}:\mathcal{L}(G)\rightarrow (\F_q)^n\quad,\quad \mathrm{ev}(f)=(f(P_1),\dots,f(P_n)).
	\]
	This map is $\F_q$-linear and it is injective if $n>\deg(G)$.
	
	The AG-code $C_{\mathcal{L}}(D,G)$ associated with the divisors $D$ and $G$ is then defined as $\mathrm{ev}(\mathcal{L}(G))$. It is well known that $\ell(G)>\ell(G-D)$ and that $C_{\mathcal{L}}(D,G)$ is an $[n,\ell(G)-\ell(G-D),d]_q$ code, where $d\ge d^*=n-\deg(G)$, and $d^*$ is the so called \textit{designed minimum distance} of the code. 
	
	\section{Intersections between $\mathcal{X}$ and a curve $y=A(x)$ of degree $h$}
	Our aim is to find out the intersection over $\F_{q^3}$ of $\mathcal{X}$ with the curve defined by the polynomial $y=A(x)$ of degree $h$, so $A(x)=A_{h}x^h+\dots+A_0$, where $A_h\ne0$ and $A_i\in \F_{q^r}$.
	More precisely, given two curves $\mathcal{X}$ and $\mathcal{Y}$ lying in the affine plane $\mathbb{A}^2(\F_{q^r})$ we call \textit{planar intersection} (or simply intersection) the number of points in $\mathbb{A}^2(\F_{q^r})$ that lie in both curves, disregarding multiplicity.
	Substituting $y=A(x)$ in the equation of the norm-trace curve, we get, by the linearity of T, 
	\[
	\text{N}(x)=\text{T}(A_hx^h)+\dots+\text{T}(A_1x)+\text{T}(A_0).
	\]
	Given a basis $\mathcal{B}=\{w_0,\dots,w_{r-1}\}$ of $\F_{q^r}$ over $\F_q$, we know that there is a vector space isomorphism $\Phi_{\mathcal{B}}:(\F_q)^r\rightarrow\F_{q^r}$ such that $\Phi_{\mathcal{B}}(s_0,\dots, s_{r-1})=\sum_{i=0}^{r-1}s_iw_i$. 
	
	The maps $\mathrm{N,T}:\F_{q^r}\rightarrow\F_q$ can be seen as maps from $(\F_q)^r$ to $\F_q$, identifying $\widetilde{\text{N}}=\text{N}\circ\Phi_{\mathcal{B}}$ and $\widetilde{\text{T}}=\text{T}\circ\Phi_{\mathcal{B}}$ with $\mathrm{N}$ and $\mathrm{T}$. Also, we can consider $\text{T}_i:=\text{\text{T}}(A_ix^i)$  and $\widetilde{\text{T}}_i:=\text{T}_i\circ\Phi_{\mathcal{B}}$, $1\le i\le h$.
	From now on, we will take as $\mathcal{B}$ a normal basis, i.e. a basis $\mathcal{B}=\{\alpha,\alpha^q,\dots,\alpha^{q^{r-1}}\}$, for some $\alpha\in\F_{q^3}$. We know that such a basis exists, see \cite[Theorem 2.35]{LN1997}. A simple manipulation shows that $\widetilde{\text{N}}$ and $\widetilde{\text{T}}_i$ are homogeneous polynomials of degree respectively $r$ and $i$ in $\F_q[x_0,\dots,x_{r-1}]$. Therefore
	\begin{equation}
	\label{eq:superficie}
	\widetilde{\text{N}}(x_0,\dots,x_{r-1})=\widetilde{\text{T}}_h(x_0,\dots,x_{r-1})+\dots+\widetilde{\text{T}}_1(x_0,\dots,x_{r-1})+D
	\end{equation}
	which is the equation of a hypersurface of $\mathbb{A}^r(\overline{\F}_q)$, where $D=\text{T}(A_0)$.
	Notice that the LHS has degree $r$, while the RHS has degree $h$.
	
	\section{Case $r=3$ and $h=2$}
	We are interested in this case to find the number of possible intersections between the norm-trace curve and the parabolas. By parabola we mean a curve $y=Ax^2+Bx+C$, $A,B,C\in\F_{q^3}$ and $A\ne0$. These numbers help to determine some weights for the corresponding AG code, see Section~6. From now on, $\mathcal{B}=\{\alpha,\alpha^q,\alpha^{q^2}\}$.
	
	Specializing to $y=Ax^2+Bx+C$, Equation (\ref{eq:superficie}) reads
	\begin{equation}
	\label{eq:superficie3}
	\widetilde{\text{N}}(x_0,x_1,x_{2})=\widetilde{\text{T}}_2(x_0,x_1,x_{2})+\widetilde{\text{T}}_1(x_0,x_1,x_{2})+D.
	\end{equation}
	
	The map $\Phi^{-1}_{\mathcal{B}}:\F_{q^3}\rightarrow(\F_q)^3$ induces a correspondence between $\F_q[x_0,x_1,x_2]$ and $\F_{q^3}[x]$ such that we can substitute $x$ with $x_0\alpha+x_1\alpha^q+x_2\alpha^{q^2}$ and $x^2$ with
	\[ 
	x_0^2\alpha^2+x_1^2\alpha^{2q}+x_2^2\alpha^{2q^2}+2x_0x_1\alpha^{q+1}+2x_0x_2\alpha^{q^2+1}+2x_1x_2\alpha^{q^2+q}.
	\]
	%and the following \textcolor{red}{results hold.}
	%\begin{remark}
	%Chosen two elements $A=A_0\alpha+A_1\alpha^q+A_2\alpha^{q^2}$ and $B=B_0\alpha+B_1\alpha^q+B_2\alpha^{q^2}$ then
	%\[
	%\text{T}(AB)=A_0B_0+A_1B_1+A_2B_2
	%\]
	%\end{remark}
	%\begin{proof}
	%\[
	%\begin{split}  
	%\text{T}(AB)&=\text{T}((A_0\alpha+A_1\alpha^q+A_2\alpha^{q^2})(B_0\alpha+B_1\alpha^q+B_2\alpha^{q^2}))\\
	%&=\text{T}(A_0B_0\alpha^2+A_0B_1\alpha^{q+1}+A_0B_1\alpha^{q^2+1}+A_1B_0\alpha^{q+1}+A_1B_1\alpha^{2q}+A_1B_2\alpha^{q^2+q}+A_2B_2\alpha^{2q^2})\\
	%&=A_0B_0+A_1B_1+A_2B_2
	%\end{split}
	%\]
	%\end{proof}
	Using this relation we want to write down the explicit equation of the surface (\ref{eq:superficie3}) of $\mathrm{AG(3,q)}$.
	
	\[
	\begin{split}
	\widetilde{\text{T}}_1&=B(x_0\alpha+x_1\alpha^q+x_2\alpha^{q^2})+B^q(x_0\alpha^q+x_1\alpha^{q^2}+x_2\alpha)+B^{q^2}(x_0\alpha^{q^2}+x_1\alpha+x_2\alpha^{q})\\
	&=x_0\text{T}(\alpha B)+x_1\text{T}(\alpha B^{q^2})+x_2\text{T}(\alpha B^q),
	\end{split}
	\]
	
	\[
	\begin{split}
	\widetilde{\text{T}}_2=&A(x_0\alpha+x_1\alpha^{q}+x_2\alpha^{q^2})^2+A^q(x_0\alpha^q+x_1\alpha^{q^2}+x_2\alpha)^{2}+A^{q^2}(x_0\alpha^{q^2}+x_1\alpha+x_2\alpha^{q})^{2}\\
	=&x_0^2\text{T}(A\alpha^2)+x_1^2\text{T}(A\alpha^{2q})+x_2^2\text{T}(A\alpha^{2q^2})+2x_0x_1\text{T}(A\alpha^{q+1})+2x_0x_2\text{T}(A\alpha^{q^2+1})+2x_1x_2\text{T}(A\alpha^{q^2+q}),\\
	\end{split}
	\]
	
	\[
	\begin{split}
	\widetilde{\text{N}}=&(x_0\alpha^{q^2}+x_1\alpha+x_2\alpha^{q})(x_0\alpha^q+x_1\alpha^{q^2}+x_2\alpha)(x_0\alpha+x_1\alpha^{q}+x_2\alpha^{q^2})\\
	=&(x_0^3+x_1^3+x_2^3)\text{N}(\alpha)+(x_0^2x_1+x_1^2x_2+x_2^2x_0)\text{T}(\alpha^{q+2})+(x_0^2x_2+x_1^2x_0+x_2^2x_1)\text{T}(\alpha^{2q+1})\\&+x_0x_1x_2(3\text{N}(\alpha)+\text{T}(\alpha^3)).\\
	\end{split}
	\]
	
	Therefore (\ref{eq:superficie3}) reads
	
	\begin{equation}
	\label{EqSuperficie1}
	\begin{split}
	0=&-(x_0^3+x_1^3+x_2^3)\text{N}(\alpha)-(x_0^2x_1+x_1^2x_2+x_2^2x_0)\text{T}(\alpha^{q+2})-(x_0^2x_2+x_1^2x_0+x_2^2x_1)\text{T}(\alpha^{2q+1})\\&-x_0x_1x_2(3\text{N}(\alpha)+\text{T}(\alpha^3))+x_0^2\text{T}(A\alpha^2)+x_1^2\text{T}(A\alpha^{2q})+x_2^2\text{T}(A\alpha^{2q^2})+2x_0x_1\text{T}(A\alpha^{q+1})\\&+2x_0x_2\text{T}(A\alpha^{q^2+1})+2x_1x_2\text{T}(A\alpha^{q^2+q})+x_0\text{T}(\alpha B)+x_1\text{T}(\alpha B^{q^2})+x_2\text{T}(\alpha B^q)+D.
	\end{split}
	\end{equation}
	Denote by $\S_1$ the surface defined by the polynomial above. Note that $\S_1$ is defined over $\F_q$. For a given surface, let $\mathcal{X}(\F_q)$ be the set of its $\F_q$-rational points.
	\begin{remark}
		By construction, the $\F_q$-rational points of $\S_1$, i.e. the points in $\S_1(\F_q)$, correspond to the intersections in $\mathbb{A}^2(\F_{q^3})$ between the norm-trace curve and the parabola $y=Ax^2+Bx+C$. This happens because (\ref{EqSuperficie1}) comes from a sequence of manipulations that started with $\mathrm{N}(x)=\mathrm{Tr}(Ax^2+Bx+C)$, i.e. there exists an $x\in\F_{q^3}$ such that $\mathrm{N}(x)=\mathrm{Tr}(Ax^2+Bx+C)$ if and only if there exist $(x_0,x_1,x_2)\in(\F_q)^3$ that satisfies (\ref{EqSuperficie1}) and $x=x_0\alpha+x_1\alpha^q+x_2\alpha^{q^2}$.
	\end{remark}
	
	Equation $(\ref{EqSuperficie1})$ can be also written as 
	\begin{equation*}
	\begin{split}
	0=&-(x_0\alpha+x_1\alpha^q+x_2\alpha^{q^2})(x_0\alpha^q+x_1\alpha^{q^2}+x_2\alpha)(x_0\alpha^{q^2}+x_1\alpha+x_2\alpha^q)+A(x_0\alpha+x_1\alpha^q+x_2\alpha^{q^2})^2\\&+A^q(x_0\alpha^q+x_1\alpha^{q^2}+x_2\alpha)^{2}+A^{q^2}(x_0\alpha^{q^2}+x_1\alpha+x_2\alpha^q)^{2}+B(x_0\alpha+x_1\alpha^q+x_2\alpha^{q^2})\\&+B^q(x_0\alpha^q+x_1\alpha^{q^2}+x_2\alpha)+B^{q^2}(x_0\alpha^{q^2}+x_1\alpha+x_2\alpha^q)+D.
	\end{split}
	\end{equation*}

	Consider the non-singular matrix (since we are dealing with three linearly independent elements, see \cite[Corollary 2.38]{LN1997})
	\[
	M=\begin{pmatrix}
	\alpha&\alpha^q&\alpha^{q^2}\\
	\alpha^q&\alpha^{q^2}&\alpha\\
	\alpha^{q^2}&\alpha&\alpha^q
	\end{pmatrix}
	\]  
	and the affine change of variables in $\mathbb{A}^3$ defined by $\psi(x_0,x_1,x_2)=M(x_0,x_1,x_2)^t.$
	Let $\S_2$ be the corresponding surface obtained from $\S_1$. Then  $\S_2$ is defined over $\F_{q^3}$, and has equation
	\begin{equation}
	\label{EqSuperficie2}
	X_0X_1X_2=AX_0^2+A^qX_1^2+A^{q^2}X_2^2+BX_0+B^qX_1+B^{q^2}X_2+D.
	\end{equation}

	\begin{remark}
		\label{remark:senso}
		Clearly, $\F_{q}$-rational points of $\mathcal{S}_1$ are mapped to $\F_{q^3}$-rational points of $\mathcal{S}_2$ of the form $(\beta,\beta^q,\beta^{q^2})$, $\beta\in\F_{q^3}$, and viceversa. Moreover, $\psi$ preserves number and degree of any absolutely
		irreducible component of $\S_1$ and its singularities.
	\end{remark}
	
	\begin{proposition}
		$\mathcal{S}_1$ is an  absolutely irreducible cubic surface.
	\end{proposition}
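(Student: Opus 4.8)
The plan is to transfer the problem to $\mathcal{S}_2$. By Remark~\ref{remark:senso}, the change of coordinates $\psi$ preserves the number and the degrees of the absolutely irreducible components, so $\mathcal{S}_1$ is an absolutely irreducible cubic surface if and only if $\mathcal{S}_2$ is. The surface $\mathcal{S}_2$ is defined by
\[
F(X_0,X_1,X_2):=X_0X_1X_2-AX_0^2-A^qX_1^2-A^{q^2}X_2^2-BX_0-B^qX_1-B^{q^2}X_2-D
\]
(see~(\ref{EqSuperficie2})), which has degree $3$; hence it suffices to show that $F$ is irreducible over $\overline{\F}_q$.

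Since a degree-$3$ polynomial that factors nontrivially over a field must have a factor of degree $1$, it is enough to prove that $F$ has no linear factor in $\overline{\F}_q[X_0,X_1,X_2]$. Suppose, for contradiction, that $L=\ell+c$ divides $F$, where $\ell$ is a nonzero linear form and $c\in\overline{\F}_q$. Comparing the degree-$3$ homogeneous parts, $\ell$ must divide $X_0X_1X_2$, so by unique factorization $\ell$ is a scalar multiple of $X_0$, of $X_1$, or of $X_2$. Rescaling $L$, we may therefore assume that $L$ equals $X_0-a$, $X_1-b$, or $X_2-c$ for some element of $\overline{\F}_q$.

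In each of these three cases the corresponding specialization of $F$ --- namely $F(a,X_1,X_2)$, $F(X_0,b,X_2)$, or $F(X_0,X_1,c)$ --- has to be identically zero. But $F(a,X_1,X_2)$ still contains the monomial $-A^qX_1^2$, while $F(X_0,b,X_2)$ and $F(X_0,X_1,c)$ still contain $-AX_0^2$; since we are dealing with a genuine parabola we have $A\ne0$ (hence also $A^q\ne0$), so none of these polynomials vanishes identically, a contradiction. Thus $F$ has no linear factor and is absolutely irreducible, which proves the claim.

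I expect no serious obstacle: once $\mathcal{S}_2$ is available the whole argument is a short coefficient comparison. The one point that really matters is the hypothesis $A\ne0$, which is exactly what produces the square terms that block a linear factor --- this is where the argument would fail for $A=0$, a case outside the scope of the statement since it corresponds to a line rather than a parabola. One should also make sure that Remark~\ref{remark:senso} is invoked correctly, so that working with $\mathcal{S}_2$ in place of $\mathcal{S}_1$ is legitimate.
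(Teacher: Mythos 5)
Your proof is correct and follows essentially the same route as the paper: reduce to $\mathcal{S}_2$ via Remark~\ref{remark:senso}, use the degree-$3$ homogeneous part $X_0X_1X_2$ to force any linear factor to be of the form $X_i+\text{const}$, and derive a contradiction from $A\ne0$. Your final step (specializing $X_i$ to the root and observing the surviving square term) is a slightly cleaner way of doing the coefficient comparison the paper carries out explicitly with the quadratic cofactor, but it is the same argument in substance.
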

	\begin{proof}
		By Remark \ref{remark:senso} it is sufficient to prove that $\mathcal{S}_2$ is absolutely irreducible. We proceed by contradiction. If $\mathcal{S}_2$ is reducible, since its degree is three, then it must contain a plane. In this case we would have
		
		{\footnotesize
			\begin{equation}
			\label{prodotto:irrid}
			q(X_0,X_1,X_2)(k_0X_0+k_1X_1+k_2X_2+k_3)=X_0X_1X_2-AX_0^2-A^qX_1^2-A^{q^2}X_2^2-BX_0-B^qX_1-B^{q^2}X_2-D
			\end{equation}
		}
		where $q(x_0,x_1,x_2)$ is the equation of a quadric surface, $k_j\in\overline{\F}_q$, $j\in\{0,\dots,3\}$, and at least one of $k_0,k_1,k_2$ is nonzero.
		
		Consider the intersections of $\S_2$ with the plane at the infinity, then
		\[
		Q(X_0,X_1,X_2)(k_0X_0+k_1X_1+k_2X_2)=X_0X_1X_2
		\]
		where $Q(X_0,X_1,X_2)$ is the polynomial given by the degree 2 terms of $q(X_0,X_1,X_2)$. This expression implies that two among $k_0,k_1,k_2$ have to be zero, and then the plane has equation $k_iX_i+k_3=0$ for $i\in\{0,1,2\}$. Suppose, whitout loss of generality that $i=0$, then Equation (\ref{prodotto:irrid}) reads \[
		\begin{split}
		q(X_0,X_1,X_2)(X_0+k)=X_0X_1X_2-AX_0^2-A^qX_1^2-A^{q^2}X_2^2-BX_0-B^qX_1-B^{q^2}X_2-D
		\end{split}
		\]
		for a given $k\in\overline{\F}_q$.
		
		Applying the identity principle for polynomials, we obtain that $q(X_0,X_1,X_2)=X_1X_2+h_0X_0+h_1X_1+h_2X_2+h_3$, where $h_i\in\overline{\F}_q$, $i\in\{0,\dots,3\}$. At this point Equation (\ref{prodotto:irrid}) becomes
		{\footnotesize\[
			(X_1X_2+h_0X_0+h_1X_1+h_2X_2+h_3)(k_0X_0+k_3)=X_0X_1X_2-AX_0^2-A^qX_1^2-A^{q^2}X_2^2-BX_0-B^qX_1-B^{q^2}X_2-D
			\]}
		and since $A^q\ne0$ this cannot happen.

	\end{proof}
	
	What we want to do now is to estimate the number of $\F_q$-rational points of $\mathcal{S}_1$. Since they correspond to the intersections between $\mathcal{X}$ and $y=Ax^2+Bx+C$, by applying the B\'ezout theorem we get that
	\[
	|\mathcal{S}_1(\F_q)|\le 2(q^2+q+1).
	\]
	This bound can be improved, as we will see. A better estimate can be obtained using the Lang-Weil bound.
	\begin{theorem}[\cite{LW1954}]
		Given nonnegative integers $n, d$ and $r$, with $d > 0$, there is a
		positive constant $A(n,d,r)$ such that for every finite field $\F_q$, and every	irreducible subvariety $\mathcal{X}\subseteq\mathbb{P}^n(\F_q)$ of dimension $r$ and degree $d$, we have
		\[
		||\mathcal{X}(\F_q)|-q^r|\le(d-1)(d-2)q^{r-\frac{1}{2}}+A(n,d,r)q^{r-1}.
		\]
	\end{theorem}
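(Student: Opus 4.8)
The plan is to treat this as the classical Lang--Weil estimate and to prove it by induction on the dimension $r$, reducing the general case to the Riemann hypothesis for curves (Weil's theorem). The base case $r=0$ is immediate: an absolutely irreducible $0$-dimensional variety over $\F_q$ consists of a single $\F_q$-rational point, so $|\X(\F_q)|=1=q^0$. For $r=1$ one passes to the smooth projective model $\widetilde{\X}$; normalization changes the number of rational points by at most a constant depending only on $d$, the genus of $\widetilde{\X}$ satisfies $g\le\binom{d-1}{2}$, and Hasse--Weil gives $\bigl|\,|\widetilde{\X}(\F_q)|-(q+1)\,\bigr|\le 2g\sqrt q\le(d-1)(d-2)\sqrt q$; absorbing the normalization discrepancy and the $+1$ into $A(n,d,1)q^{0}$ yields the inequality for $r=1$.

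For the inductive step, assume the statement in dimension $r-1$ and let $\X\subseteq\P^n$ be absolutely irreducible of dimension $r\ge2$ and degree $d$; we may assume $\X$ is not contained in a hyperplane, otherwise we replace $\P^n$ by that hyperplane and decrease $n$. The idea is to intersect $\X$ with every hyperplane $H\subseteq\P^n$ defined over $\F_q$ and to exploit the double counting
\[
\sum_{H}|\X(\F_q)\cap H|=\frac{q^{n}-1}{q-1}\,|\X(\F_q)|,
\]
valid because each point of $\P^n(\F_q)$ lies on exactly $(q^{n}-1)/(q-1)$ of the $(q^{n+1}-1)/(q-1)$ hyperplanes defined over $\F_q$. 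One then separates the hyperplanes into \emph{good} ones, for which $\X\cap H$ is absolutely irreducible of dimension $r-1$ and degree $d$, and \emph{bad} ones. A Bertini-type theorem over $\overline{\F}_q$ shows that the bad hyperplanes form a proper closed subset of the dual space of degree bounded in terms of $n$ and $d$ only, hence of number $O_{n,d}(q^{n-1})$. Applying the inductive hypothesis to the good sections and the crude bound $|\X(\F_q)\cap H|=O_{d}(q^{r-1})$ to the bad ones, and dividing by $(q^{n}-1)/(q-1)\sim q^{n-1}$, gives $|\X(\F_q)|=q^{r}+O(q^{r-1/2})$, where the coefficient of $q^{r-1/2}$ is exactly the $(d-1)(d-2)$ supplied by the good $(r-1)$-dimensional sections.

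The step I expect to be the main obstacle is making the Bertini argument \emph{effective}: one must know not only that a generic hyperplane section of $\X$ is absolutely irreducible of the correct dimension and degree, but that the locus of bad hyperplanes in the dual $\P^n$ has degree bounded by an explicit function of $n$ and $d$, so that its contribution is genuinely of lower order; this is where the uniformity of the constant $A(n,d,r)$ originates. The rest is bookkeeping: propagating $(d-1)(d-2)$ through the induction without growth, and checking that all remaining discrepancies --- normalization in the curve case, bad hyperplanes, and the difference between $(q^{n}-1)/(q-1)$ and $q^{n-1}$ --- can be collected into the term $A(n,d,r)q^{r-1}$. A shorter alternative is to invoke the Grothendieck--Lefschetz trace formula together with Deligne's bounds on the $\ell$-adic Betti numbers of $\X$ in terms of $n$ and $d$, which yields $|\X(\F_q)|=q^{r}+O(q^{r-1/2})$ at once; but pinning down the precise constant $(d-1)(d-2)$ still requires comparison with the curve case, so the inductive scheme above is the more self-contained plan.
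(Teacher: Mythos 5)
This theorem is not proved in the paper at all: it is quoted from Lang and Weil \cite{LW1954} and used as a black box, so there is no internal argument to compare yours against. Judged on its own, your plan follows the classical Lang--Weil route, which is essentially the proof in \cite{LW1954} itself: the curve case via normalization, the genus bound $g\le\frac{(d-1)(d-2)}{2}$ and Hasse--Weil; then induction on $r$ by double counting incidences between points of $\X(\F_q)$ and $\F_q$-rational hyperplanes, splitting the hyperplanes into good ones (section absolutely irreducible of dimension $r-1$ and degree $d$) and bad ones. Your bookkeeping is sound: roughly $q^{n}$ hyperplanes, each good one contributing an error $(d-1)(d-2)q^{r-3/2}+A(n-1,d,r-1)q^{r-2}$, divided by $(q^{n}-1)/(q-1)$, reproduces $(d-1)(d-2)q^{r-1/2}$ without growth of the leading constant, while the bad hyperplanes and the discrepancy between $(q^{n+1}-1)/(q-1)$ and $q^{n}$ are absorbed into $A(n,d,r)q^{r-1}$.

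The genuine gap is the one you flag yourself: the effective Bertini statement, namely that the hyperplanes $H$ for which $\X\cap H$ fails to be geometrically irreducible of dimension $r-1$ and degree $d$ lie in a closed subset of the dual $\P^{n}$ whose degree is bounded in terms of $n$ and $d$ alone. This is precisely the main lemma of Lang--Weil and is the heart of the proof, not a routine verification; without an explicit degree bound the count of bad hyperplanes, and hence the uniformity of $A(n,d,r)$ in $q$, is lost. You also silently use two auxiliary facts that need proof or citation: the crude estimate that a closed subset of $\P^{n}$ of dimension at most $s$ and degree at most $D$ has at most $Dq^{s}+O_{n,D}(q^{s-1})$ rational points (used both to count bad hyperplanes and to bound their sections), and the degree/genus bound for the normalization of a possibly singular, possibly non-planar degree-$d$ curve together with the bounded discrepancy between the curve and its normalization. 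Finally, note that throughout one must read ``irreducible'' as \emph{absolutely} irreducible (as Lang--Weil do): for a variety irreducible over $\F_q$ but split over $\overline{\F}_q$ the estimate is false, and in your base case $r=0$ it is exactly absolute irreducibility that forces the single geometric point to be $\F_q$-rational. So as a plan your proposal is the standard and correct one; as a proof it is incomplete precisely at the quantitative Bertini step.
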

	
	\begin{corollary}
		The number of $\F_q$-rational points on the surface $\mathcal{S}_1(\F_{q})$ is bounded by 
		\[q^2+2q^\frac{3}{2}+A(3,3,2)q.\]
	\end{corollary}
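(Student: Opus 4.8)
The plan is a direct application of the Lang--Weil bound, using the fact just established that $\mathcal{S}_1$ is an absolutely irreducible cubic surface. First I would replace the affine surface $\mathcal{S}_1\subseteq\mathbb{A}^3$ by its projective closure $\overline{\mathcal{S}_1}\subseteq\P^3$, obtained by homogenising the defining polynomial (\ref{EqSuperficie1}) with a fourth variable $x_3$. Since the top-degree (cubic) form of (\ref{EqSuperficie1}) is nonzero, the homogenisation is not divisible by $x_3$, so passing to the projective closure preserves absolute irreducibility and leaves the degree unchanged; thus $\overline{\mathcal{S}_1}$ is an absolutely irreducible projective variety of dimension $r=2$ and degree $d=3$ inside $\P^n$ with $n=3$, and it meets the hypotheses of the Lang--Weil theorem quoted above.

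Next I would substitute $n=3$, $d=3$, $r=2$ into the inequality
\[
\big||\mathcal{X}(\F_q)|-q^r\big|\le (d-1)(d-2)q^{r-\frac{1}{2}}+A(n,d,r)q^{r-1}.
\]
Since $(d-1)(d-2)=2$, $r-\frac{1}{2}=\frac{3}{2}$ and $r-1=1$, this gives
\[
\big||\overline{\mathcal{S}_1}(\F_q)|-q^2\big|\le 2q^{\frac{3}{2}}+A(3,3,2)q,
\]
and in particular $|\overline{\mathcal{S}_1}(\F_q)|\le q^2+2q^{\frac{3}{2}}+A(3,3,2)q$. Since every affine $\F_q$-rational point of $\mathcal{S}_1$ is an $\F_q$-rational point of $\overline{\mathcal{S}_1}$, we have $|\mathcal{S}_1(\F_q)|\le|\overline{\mathcal{S}_1}(\F_q)|$, and the claimed bound follows.

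There is essentially no obstacle here: once absolute irreducibility is available the argument is a one-line substitution. The only points worth a word of care are the passage to the projective closure (so that the projective form of Lang--Weil applies), the remark that this changes neither dimension nor degree, and the observation that we only use the upper half of the Lang--Weil estimate; its lower half would give a matching lower bound on $|\mathcal{S}_1(\F_q)|$, which is not needed here. It is also worth noting in passing that, asymptotically in $q$, this estimate improves on the B\'ezout bound $|\mathcal{S}_1(\F_q)|\le 2(q^2+q+1)$ recorded just above.
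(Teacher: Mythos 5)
Your proposal is correct and is exactly the argument the paper intends: the corollary is a direct substitution of $n=3$, $d=3$, $r=2$ into the Lang--Weil bound, justified by the preceding proposition that $\mathcal{S}_1$ is an absolutely irreducible cubic surface. Your extra care about passing to the projective closure (and that this preserves degree and irreducibility) only makes explicit what the paper leaves implicit.
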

	
	This bound improves Bézout’s Theorem. Also, other theoretical estimates are known; see \cite{B2013}. 
	In what follows we will provide a bound of the type
	\[\S_1(\F_{q})\le q^2+\eta q+\mu\]
	where $\mu<q$ and $\eta$ is upper bounded by a constant (independent from $q$ and $\mu$). Experimentally we found the following
	\begin{fact}
		For $q\in\{2,\dots,29\}$, $|\eta|\le2$ and $\mu=1$.
	\end{fact}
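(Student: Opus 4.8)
We outline a proof. The statement is a finite computational claim, and the plan is an exhaustive verification, organised so that the search space stays small. By the Remark following (\ref{EqSuperficie1}), for a fixed parabola $y=Ax^2+Bx+C$ the quantity $|\mathcal{S}_1(\F_q)|$ equals $\#\{x\in\F_{q^3}\ :\ \mathrm{N}(x)=\mathrm{T}(Ax^2+Bx+C)\}$, and writing this integer in the form $q^2+\eta q+\mu$ with $0\le\mu<q$ determines $\eta$ and $\mu$; so the task is to verify, for every prime power $q\in\{2,3,4,5,7,8,9,11,13,16,17,19,23,25,27,29\}$ and every admissible $(A,B,C)$, that $\mu=1$ and $|\eta|\le2$, i.e. that $|\mathcal{S}_1(\F_q)|\in\{(q-1)^2,\,q^2-q+1,\,q^2+1,\,q^2+q+1,\,(q+1)^2\}$.

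The first step is to cut down the parameters. Since $\mathrm{T}(Ax^2+Bx+C)=\mathrm{T}(Ax^2)+\mathrm{T}(Bx)+\mathrm{T}(C)$, the count depends on $C$ only through $D=\mathrm{T}(C)\in\F_q$, leaving $q$ relevant constant terms. The substitution $x\mapsto\lambda x$, $\lambda\in\F_{q^3}^{*}$, turns $\mathrm{N}(x)=\mathrm{T}(Ax^2+Bx+C)$ into $\mathrm{N}(x)=\mathrm{T}\bigl(\tfrac{A\lambda^2}{\mathrm{N}(\lambda)}x^2+\tfrac{B\lambda}{\mathrm{N}(\lambda)}x+\tfrac{C}{\mathrm{N}(\lambda)}\bigr)$ and preserves the number of solutions; since $\gcd(q^2+q-1,\,q^3-1)=1$ the map $\lambda\mapsto\lambda^2/\mathrm{N}(\lambda)=\lambda^{-(q^2+q-1)}$ is a bijection of $\F_{q^3}^{*}$, so we may normalise $A=1$. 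Finally, reindexing the solution set by $x\mapsto x^{q}$ shows that the count is constant on each Frobenius orbit $\{B,B^{q},B^{q^2}\}$, so $B$ need only range over a set of orbit representatives. This leaves about $q^4/3$ cases when $q=29$.

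With these reductions in hand one proceeds directly: precompute on $\F_{q^3}$ the tables $x\mapsto\mathrm{N}(x)=x^{q^2+q+1}$ and $x\mapsto\mathrm{T}(x^2)$, and for each orbit representative $B$ build the table $x\mapsto\mathrm{T}(Bx)$ together with the histogram of the values $\mathrm{N}(x)-\mathrm{T}(x^2)-\mathrm{T}(Bx)\in\F_q$; evaluating this histogram at $D$ gives $|\mathcal{S}_1(\F_q)|$ for all $D$ simultaneously, from which $\eta$ and $\mu$ are read off and the bounds checked. Looping over all $q$ in the list completes the verification. The only genuine obstacle is running time: a naive enumeration of all $(A,B,C)$, each scanned over the whole of $\F_{q^3}$, grows far too fast, whereas the reductions above bring the cost down to roughly $q^{6}$ elementary operations per field, comfortably within reach of a computer algebra system even for $q=29$.

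Finally, it is worth remarking that $\mu=1$ is not accidental. The degree-$3$ homogeneous part of (\ref{EqSuperficie1}) is, up to sign, the product $L_0L_1L_2$ of three linear forms whose coefficient vectors are $(\alpha,\alpha^{q},\alpha^{q^2})$ and its cyclic shifts; because $\{\alpha,\alpha^{q},\alpha^{q^2}\}$ is a normal basis, each $L_i$ vanishes on $\F_q^{3}$ only at the origin, so the projective closure of $\mathcal{S}_1$ has no $\F_q$-rational point at infinity and $|\mathcal{S}_1(\F_q)|$ equals the number of $\F_q$-points of an irreducible projective cubic surface (irreducibility being the content of the Proposition proved above). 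For such a surface the Lefschetz trace formula forces the point count to be $\equiv1\pmod q$ --- immediately when it is smooth, and one checks the remaining types (rational double points, non-normal cubics, cones over a cubic curve) as well --- so $\mu=1$ is in fact provable. The bound $|\eta|\le2$, by contrast, genuinely relies on the experiment.
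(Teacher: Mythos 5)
The paper gives no argument for this Fact at all: it is announced as the outcome of an experiment, so the ``reference proof'' is a brute-force computer check, and your proposal is the same kind of verification, just organised more efficiently. Your reductions are all sound: the dependence on $C$ only through $D=\mathrm{T}(C)$ is immediate from $\F_q$-linearity of the trace; the normalisation $A=1$ is legitimate because $\gcd(q^2+q-1,q-1)=1$ (reduce mod $q-1$) and $\gcd(q^2+q-1,q^2+q+1)$ divides $2$ while $q^2+q-1$ is odd, so $\lambda\mapsto\lambda^{-(q^2+q-1)}$ is indeed a bijection of $\F_{q^3}^{*}$; and $x\mapsto x^q$ does carry the solution set for $B$ bijectively onto that for $B^q$, justifying the restriction to Frobenius orbit representatives. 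The histogram over $D$ then treats all constant terms at once, and the resulting cost of roughly $q^6$ operations per field is realistic for all sixteen prime powers $q\le 29$, so the plan does verify the Fact. Your closing remark is the one place where you go beyond the paper, and it is essentially correct: since the degree-three part of (\ref{EqSuperficie1}) is the norm form with respect to the normal basis, the projective closure of $\S_1$ has no $\F_q$-rational point at infinity, so the affine and projective counts agree; moreover the congruence $|\S_1(\F_q)|\equiv 1\pmod q$ can be obtained without your case analysis by singularity type (and without irreducibility): Ax's divisibility theorem applied to the defining cubic form in four variables shows that the number of projective $\F_q$-points of \emph{any} cubic surface in $\P^3$ is $\equiv 1\pmod q$. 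This settles the ``$\mu=1$'' half of the paper's Conjecture for every $q$, leaving only $|\eta|\le 2$ to the computation, exactly as you observe.
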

	\begin{conjecture}
		$|\eta|\le2$ and $\mu=1$ for all $q$.
	\end{conjecture}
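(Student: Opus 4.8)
\section*{A proof plan for the Conjecture}

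The plan is to go beyond Lang--Weil by using the geometry of $\S_1$ as a cubic surface over a finite field. By Remark~\ref{remark:senso} it suffices to count the points of $\S_2$ fixed by the twisted Frobenius $\phi\colon(X_0,X_1,X_2)\mapsto(X_2^q,X_0^q,X_1^q)$: since $A,B\in\F_{q^3}$ and $D=\mathrm{T}(C)\in\F_q$, equation (\ref{EqSuperficie2}) is $\phi$-invariant, and its $\phi$-fixed points are precisely the triples $(\beta,\beta^q,\beta^{q^2})$ with $\beta\in\F_{q^3}$, i.e.\ the points of $\S_1(\F_q)$. First I would homogenise (\ref{EqSuperficie2}) and pass to the projective closure $\overline{\S_2}\subset\P^3$. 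Its plane at infinity meets $\overline{\S_2}$ exactly in the triangle $X_0X_1X_2=0$, and since $A\neq0$ a short Jacobian check at the only candidate points $[1:0:0:0]$, $[0:1:0:0]$, $[0:0:1:0]$ shows that $\overline{\S_2}$, hence $\overline{\S_1}$, is smooth along the plane at infinity, so all its singularities, if any, are affine and isolated. Transporting back through $\psi$, the plane at infinity of $\overline{\S_1}$ carries the zero locus of the norm form $\widetilde{\mathrm{N}}$, namely three distinct lines $\ell_0,\ell_1,\ell_2$ whose coefficient vectors are the rows of the nonsingular matrix $M$; hence the three lines have no common point, and an $\F_q$-rational point on one of them would be $\mathrm{Frob}_q$-fixed and therefore lie on all three, which is impossible. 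Thus $\overline{\S_1}$ has no $\F_q$-rational point at infinity and $|\S_1(\F_q)|=|\overline{\S_1}(\F_q)|$.

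Assume first that $\overline{\S_1}$ is smooth. A smooth cubic surface has no odd $\ell$-adic cohomology, so the Lefschetz trace formula gives $|\overline{\S_1}(\F_q)|=q^2+tq+1$, where $t=\mathrm{tr}\bigl(\phi_q\mid\mathrm{Pic}(\overline{\S_1}_{\overline{\F}_q})\otimes\mathbb{Q}\bigr)$ and $\phi_q$ is the image of Frobenius in $W(E_6)$ acting through the $27$ lines; in particular $\mu=1$ is automatic, and the conjecture reduces to the inequality $t\le2$. Now $t$ ranges over a short explicit list, and $t>2$ occurs only for a handful of conjugacy classes of $W(E_6)$, namely those whose invariant subspace on $\mathrm{Pic}\otimes\mathbb{Q}$ is large, equivalently whose $\F_q$-rational lines span a large sublattice. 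The conjugate triangle $\ell_0,\ell_1,\ell_2$ already forces $\phi_q$ to have order divisible by $3$ and to act nontrivially; a short case analysis in $W(E_6)$ then discards all but a bounded number of these bad classes --- chiefly the one of type $A_2$, in which Frobenius would have to fix $9$ of the $27$ lines. To finish, one would parametrise the remaining $24$ lines of $\overline{\S_2}$ explicitly in terms of $A,B,C$ --- for instance by running the pencils of planes through $\ell_0,\ell_1,\ell_2$ and locating where the residual conic breaks up --- and show that their cyclic incidence with the conjugate triangle propagates enough extra $3$-cycles through the configuration to push $\phi_q$ into a class with $t\le2$. (The Markoff-type shape $X_0X_1X_2=(\text{quadric})$ of (\ref{EqSuperficie2}), with its Vieta involutions, gives a second handle on the orbit structure.)

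The hard part --- and the main obstacle --- is precisely this last step: determining the field of definition of all $27$ lines uniformly in $q$ and in $(A,B,C)$ is the fine arithmetic of the $27$-line configuration of the norm-trace-parabola surface, and it is essentially the same information that underlies the complete characterisation of the parabola intersections, so the two problems are entangled. A second, more routine source of work is the singular case. One must first exclude that $\overline{\S_1}$ ever degenerates to a non-normal cubic or to a cone over a plane cubic curve --- otherwise $\eta$ could be of size $\sqrt q$ --- which should follow from smoothness at infinity together with the $X_0X_1X_2$ term forcing the singular points (the points of the surface satisfying $X_1X_2=2AX_0+B$, $X_0X_2=2A^qX_1+B^q$, $X_0X_1=2A^{q^2}X_2+B^{q^2}$) to be finitely many isolated rational double points. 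For each admissible singularity type one then passes to the minimal resolution, a weak del Pezzo surface, repeats the trace-formula analysis on its Picard lattice, and checks both $|\overline{\S_1}(\F_q)|\le q^2+2q+1$ and that the constant term stays equal to $1$, again using the conjugate triangle at infinity for leverage. Characteristics $2$ and $3$ would need separate attention, since there the quadratic part of $\widetilde{\mathrm{T}}_2$ becomes a perfect square and the Euler-relation computation of the singular locus degenerates, but the overall strategy should carry over.
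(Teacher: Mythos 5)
The statement you are addressing is left open in the paper: it is stated as a Conjecture, supported only by the computational Fact for $q\in\{2,\dots,29\}$, and the paper's subsequent sections prove strictly weaker partial results (Manin's theorem gives $\eta\in\{-2,-1,0,1,2,3,4,5,7\}$ in the smooth case, and the singular cases are bounded one by one), which is why the final code theorem uses $q^2+7q+1$ rather than $q^2+2q+1$. Your submission is, by your own description, a plan rather than a proof, so it cannot be measured against a proof in the paper; the question is whether the plan closes the gap, and it does not. The parts that are sound and genuinely useful: the reduction to fixed points of the twisted Frobenius on $\S_2$ (this is Remark \ref{remark:senso}), the smoothness of the projective closure along the plane at infinity (the paper's remark that $\S_2$ has no singular points at infinity), the observation that $\overline{\S}_1$ has no $\F_q$-rational points at infinity because its infinite section is the zero locus of the norm form, and the fact that the tritangent plane at infinity cuts out three Frobenius-conjugate lines, so the image of Frobenius in $W(E_6)$ has order divisible by $3$. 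This last point really does shrink Manin's list (it excludes $\eta=7$ and the order-two classes such as $\eta=5$), which is more than the paper establishes in the smooth case.

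The genuine gap is the step you yourself flag: excluding the remaining classes of trace $\ge 3$ that are compatible with a $3$-cycle, above all the $A_2$-type class with $\eta=4$ (order $3$, fixing $9$ lines), for which the order-divisibility argument gives nothing. You propose to parametrise the other $24$ lines in terms of $A,B,C$ and show the incidence with the conjugate triangle forces a small-trace class, but no argument is given, and nothing you have used so far distinguishes $\S_2$ from an arbitrary smooth cubic possessing a rational tritangent plane with three conjugate lines --- and such cubics can have Frobenius class $A_2$. So either a new arithmetic input on the splitting field of the $27$ lines of these specific surfaces is produced, or the method cannot conclude $|\eta|\le2$. The singular branch is also unfinished in a substantive way: passing to the minimal resolution and "checking the constant term stays $1$" is not automatic, because $\F_q$-rational double points contribute exceptional curves whose points must be subtracted, changing both $\eta$ and $\mu$ (the paper's own bounds in the rational-singularity subcases are $q^2+q-1$, $q^2+2q-2$, $q^2+3q-3$); and the exclusion of non-normal cubics and cones (where $\eta$ can be of order $\sqrt q$) is asserted, not proved --- note the paper itself merely asserts that only double points occur, so this too needs an argument, as do your deferred characteristic $2$ and $3$ cases. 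As it stands, the conjecture remains unproven by your proposal.
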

	
	Recall some previous results
	
	\begin{theorem}[\cite{M1986}, Theorem 27.1]
		Let $\mathcal{S}$ be a cubic surface over $\F_q$. If $\mathcal{S}$ is birationally trivial (i.e. to allow a $\F_q$-birational map to $\mathbb{P}^2(\F_q)$), then \[|S(\F_q)|\equiv1 \mod q.\]
	\end{theorem}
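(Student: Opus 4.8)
The plan is to transport the elementary congruence $|\mathbb{P}^2(\F_q)|=q^2+q+1\equiv 1\pmod q$ across the birational equivalence, by showing that the number of $\F_q$-rational points of a smooth projective surface over $\F_q$ is a birational invariant modulo $q$. The single computational ingredient is the effect of a blow-up: if $Z$ is a smooth projective surface over $\F_q$ and $P\in Z$ is a closed point with residue field $\kappa(P)\cong\F_{q^d}$, then the exceptional divisor of $\mathrm{Bl}_P Z$ is $\mathbb{P}^1_{\kappa(P)}$, contributing $q+1$ rational points if $d=1$ and none if $d>1$, while $P$ contributes $1$ if $d=1$ and $0$ if $d>1$; hence $|\mathrm{Bl}_P Z(\F_q)|-|Z(\F_q)|$ is in both cases divisible by $q$.

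Assume first that $\mathcal{S}$ is smooth and projective. An $\F_q$-birational map $\mathbb{P}^2\dashrightarrow\mathcal{S}$ can be resolved by a smooth projective surface $W/\F_q$ carrying birational morphisms $W\to\mathbb{P}^2$ and $W\to\mathcal{S}$. By the (field-independent) factorization theorem for birational morphisms of smooth projective surfaces, each of these is a composition of blow-ups at closed points; applying the blow-up computation along both chains gives
\[
|\mathcal{S}(\F_q)|\equiv|W(\F_q)|\equiv|\mathbb{P}^2(\F_q)|\equiv1\pmod q .
\]

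If $\mathcal{S}$ is singular, take a resolution $\pi:\widetilde{\mathcal{S}}\to\mathcal{S}$ over $\F_q$. Since $\pi$ is birational and $\mathcal{S}$ is $\F_q$-rational, $\widetilde{\mathcal{S}}$ is a smooth projective $\F_q$-rational surface, so $|\widetilde{\mathcal{S}}(\F_q)|\equiv1\pmod q$ by the previous step. As $\pi$ is an isomorphism over the smooth locus of $\mathcal{S}$, one has $|\mathcal{S}(\F_q)|-|\widetilde{\mathcal{S}}(\F_q)|=\sum_{s}\bigl(|s(\F_q)|-|\pi^{-1}(s)(\F_q)|\bigr)$, the sum over the finitely many singular closed points $s$. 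For $\mathcal{S}$ normal the fibres $\pi^{-1}(s)$ are connected configurations of smooth rational curves with tree-like dual graph (the exceptional locus of the resolution of a rational double point); a direct count — equivalently, the $\ell$-adic cohomology of such a configuration, which vanishes in degree $1$ — shows that each fibre has $\equiv1\pmod q$ rational points exactly when $s$ is $\F_q$-rational and $\equiv0$ otherwise, matching $|s(\F_q)|$ modulo $q$. Hence $|\mathcal{S}(\F_q)|\equiv|\widetilde{\mathcal{S}}(\F_q)|\equiv1\pmod q$.

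The hard part is this last step: one must pin down the singularities that can occur on a birationally trivial cubic surface (the normal case via the classification of cubic surface singularities, the non-normal case via a separate analysis of the normalization) and check that the Frobenius permutation of the exceptional components never spoils the residue modulo $q$. In the smooth case the only non-elementary input is the factorization of birational surface morphisms into blow-ups, and the congruence then follows at once.
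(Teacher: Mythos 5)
The first thing to note is that the paper does not prove this statement at all: it is imported verbatim from Manin's \emph{Cubic Forms} (Theorem 27.1), so there is no internal proof to compare against. Judged on its own, your argument is complete only in the smooth case: resolving the birational map by a smooth projective surface $W$ with morphisms to $\mathbb{P}^2$ and $\mathcal{S}$, factoring both into blow-ups at closed points, and observing that a blow-up at a closed point of residue degree $d$ changes the number of $\F_q$-points by $q$ (if $d=1$) or by $0$ (if $d>1$), does give $|\mathcal{S}(\F_q)|\equiv|\mathbb{P}^2(\F_q)|\equiv 1 \pmod q$. That is a standard and correct route over the perfect field $\F_q$ (point counts mod $q$ are a birational invariant of smooth projective surfaces).

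The genuine gap is the singular case, which is precisely the case the paper uses the theorem for (it is invoked, via Coray--Tsfasman, when $\delta=1,4$ in Section 5). Two concrete problems remain open in your sketch. First, the comparison $|\mathcal{S}(\F_q)|-|\widetilde{\mathcal{S}}(\F_q)|=\sum_{s}\bigl(|s(\F_q)|-|\pi^{-1}(s)(\F_q)|\bigr)$ presupposes finitely many singular points, i.e.\ that $\mathcal{S}$ is normal; an irreducible cubic surface can be singular along a line (e.g.\ the cone over a nodal plane cubic), is then still geometrically rational and can be birationally trivial over $\F_q$, so it falls under the statement, yet your sum is meaningless for it --- the ``separate analysis of the normalization'' you mention is exactly what is missing. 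Second, even for normal $\mathcal{S}$, the exceptional fibre is a tree of rational curves only when the singularity is a rational double point; a normal cubic surface can also carry the simple elliptic singularity (cone over a smooth plane cubic), whose exceptional curve has nonvanishing $H^1$, and one must argue that such a surface is never birationally trivial (it is not even geometrically rational) before your fibre count applies. You flag both issues yourself as ``the hard part'', but they are the actual content of the theorem beyond the smooth case, so as written this is an outline rather than a proof. By contrast, your worry about Frobenius permuting the exceptional components is already settled by your own $H^1=0$ trace argument: the trace on $H^0$ is $1$ and the trace on $H^2$ is $q$ times an integer, whatever the permutation.
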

	
	In the case in which $S_1$ is smooth we also know the possible values for $|S_1(\F_q)|$.
	
	\begin{theorem}[\cite{M1986}, Theorem 23.1]
		\label{Weil}
		Let $\mathcal{S}$ be a smooth irreducible cubic surface over $\F_q$, then the number of points of $\mathcal{S}(\F_q)$ is exactly
		\[
		|\mathcal{S}(\F_q)|= q^2+\eta q+1
		\]
		where $\eta\in\{-2,-1,0,1,2,3,4,5,7\}$.
	\end{theorem}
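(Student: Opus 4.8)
The statement is classical --- it is Manin's theorem, which we are recalling here; the proof we have in mind combines the Lefschetz trace formula, the Weil bounds, and the $E_6$ Weyl-group combinatorics governing the twenty-seven lines. The first step is to base change to $\overline{\F}_q$ and record the cohomology of a smooth cubic surface. Such a surface $\mathcal{S}_{\overline{\F}_q}$ is rational, being $\mathbb{P}^2$ blown up at six points in general position; hence $\mathrm{Pic}(\mathcal{S}_{\overline{\F}_q})\cong\mathbb{Z}^7$, the odd Betti numbers vanish, $b_0=b_4=1$ and $b_2=7$, and the cycle class map identifies $\mathrm{Pic}(\mathcal{S}_{\overline{\F}_q})\otimes\mathbb{Q}_\ell$ with the Tate twist $H^2(\mathcal{S}_{\overline{\F}_q},\mathbb{Q}_\ell)(1)$ as Frobenius modules (for a prime $\ell$ invertible in $\F_q$). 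Plugging this into the Grothendieck--Lefschetz trace formula for the geometric $q$-Frobenius $F$ on the smooth projective surface $\mathcal{S}$ gives
\[
|\mathcal{S}(\F_q)|=\sum_{i=0}^{4}(-1)^i\,\mathrm{tr}\big(F^*\mid H^i(\mathcal{S}_{\overline{\F}_q},\mathbb{Q}_\ell)\big)=1+q\cdot\mathrm{tr}\big(\phi\mid\mathrm{Pic}(\mathcal{S}_{\overline{\F}_q})\big)+q^2,
\]
where $\phi$ is the induced action of Frobenius on the Picard lattice; the factor $q$ comes from the twist, and by the Weil bounds every eigenvalue of $\phi$ is a root of unity, so $\phi$ has finite order. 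Thus $\eta=\mathrm{tr}\big(\phi\mid\mathrm{Pic}(\mathcal{S}_{\overline{\F}_q})\big)$, and the whole problem reduces to controlling this integer.

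The second step is to pin down which lattice automorphisms can occur as $\phi$. Since $\mathcal{S}$ is defined over $\F_q$, its canonical class $K_{\mathcal{S}}$ is $\phi$-invariant, and $\phi$ preserves the intersection form and permutes the twenty-seven $(-1)$-classes, i.e.\ the lines of $\mathcal{S}$. The orthogonal complement $K_{\mathcal{S}}^{\perp}$ in $\mathrm{Pic}(\mathcal{S}_{\overline{\F}_q})$ is the $E_6$ root lattice, and the group of isometries of $\mathrm{Pic}(\mathcal{S}_{\overline{\F}_q})$ fixing $K_{\mathcal{S}}$ and preserving the set of the twenty-seven line classes is precisely the Weyl group $W(E_6)$. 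Hence $\phi$ restricts on $K_{\mathcal{S}}^{\perp}$ to some $w\in W(E_6)$; splitting $\mathrm{Pic}(\mathcal{S}_{\overline{\F}_q})\otimes\mathbb{Q}=\mathbb{Q}K_{\mathcal{S}}\oplus(K_{\mathcal{S}}^{\perp}\otimes\mathbb{Q})$, with $\phi$ trivial on the first factor and the second factor the six-dimensional reflection representation, we obtain
\[
\eta=1+\mathrm{tr}\big(w\mid K_{\mathcal{S}}^{\perp}\otimes\mathbb{Q}\big),\qquad w\in W(E_6).
\]

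The third step is to evaluate $\{\,1+\mathrm{tr}(w):w\in W(E_6)\,\}$. Running through the conjugacy classes of $W(E_6)$ (there are twenty-five), whose characteristic polynomials on the reflection representation are the known degree-six products of cyclotomic polynomials, and reading off $\mathrm{tr}(w)$ as minus the coefficient of $t^5$, one finds $\mathrm{tr}(w)\in\{-3,-2,-1,0,1,2,3,4,6\}$, so that $\eta\in\{-2,-1,0,1,2,3,4,5,7\}$, as asserted. The missing value $\eta=6$ would force $\mathrm{tr}(w)=5$, i.e.\ $w$ acting with eigenvalues $1,1,1,1$ together with a primitive sixth-root-of-unity pair; by reflection length such a $w$ is a product of two reflections in roots spanning a rank-two subsystem of $E_6$, hence has order at most three --- the only rank-two subsystems of the simply-laced system $E_6$ being $A_1\times A_1$ and $A_2$ --- contradicting order six.

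The only genuinely non-routine part of this plan is the last step together with the lattice bookkeeping in the second: identifying $K_{\mathcal{S}}^{\perp}$ with the $E_6$ lattice and the relevant isometry group with $W(E_6)$, and then checking class by class that no Weyl element has trace outside $\{-3,\dots,4,6\}$. Everything upstream --- rationality of smooth cubic surfaces, the resulting shape of the $\ell$-adic cohomology, and the trace formula --- is standard machinery; and note that for the statement as given we only need the containment, whereas showing in addition that each of the nine values of $\eta$ is actually attained by some smooth cubic surface over some finite field would require separate realisability constructions that are not needed here.
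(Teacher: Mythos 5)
The paper offers no proof of this statement: it is quoted directly from Manin's book (\cite{M1986}, Theorem 23.1), so there is no internal argument to compare against. Your sketch is, in substance, the classical proof behind Manin's (and Swinnerton-Dyer's) result, and it is correct: rationality of a smooth cubic surface gives $b_1=b_3=0$, $b_2=7$ and the Frobenius-equivariant identification of $H^2$ with $\mathrm{Pic}\otimes\mathbb{Q}_\ell(-1)$; Grothendieck--Lefschetz then yields $|\mathcal{S}(\F_q)|=q^2+q\,\mathrm{tr}(\phi\mid\mathrm{Pic})+1$; Frobenius fixes $K_{\mathcal{S}}$, preserves the intersection form and permutes the $27$ lines, hence acts on $K_{\mathcal{S}}^{\perp}$ through $W(E_6)$; and $\eta=1+\mathrm{tr}(w)$ is then constrained by the conjugacy classes of $W(E_6)$.

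Two small points you should tighten. First, the geometric Frobenius on $H^2(1)$ is the inverse of the Galois action on $\mathrm{Pic}$; this is harmless because $\phi$ has finite order and integral characteristic polynomial, so $\mathrm{tr}(\phi^{-1})=\mathrm{tr}(\phi)$, but it deserves a word. Second, the containment $\eta\in\{-2,\dots,5,7\}$ requires excluding not only $\mathrm{tr}(w)=5$ (which you argue explicitly and correctly via reflection length and the fact that the only rank-two subsystems of $E_6$ are $A_1\times A_1$ and $A_2$) but also $\mathrm{tr}(w)=-4,-5,-6$; in your write-up these exclusions are hidden in the phrase ``running through the conjugacy classes.'' They do hold --- e.g.\ $-6$ would force $w=-1\notin W(E_6)$, and $-4,-5$ would require subsystems of type $5A_1$, $2A_2+2A_1$ or $A_2+4A_1$, which $E_6$ does not contain --- but you should either cite Carter's table of characteristic polynomials (or the table in Manin's Chapter IV, or Swinnerton-Dyer's paper on zeta functions of cubic surfaces) or indicate this subsystem argument, since that finite check is exactly where the asserted list of traces comes from. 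As you correctly note, attainability of all nine values is not needed for the statement.
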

	
	In view of Theorem \ref{Weil}, we consider separately
	the cases $\S_1$ smooth and $\S_1$ singular.
	
	\section{Singular case}
	From now on we investigate when $\mathcal{S}_1$ is singular. We start by observing that the possible singular points can only be double points, since $\S_1$ is a cubic irreducible surface.  Moreover, recalling that an isolated singularity $P_s$ means that there exists a neighbourhood containing only $P_s$ as singular point, we will see that $\S_2$ has only isolated singularities. This happens beacuse the ideal defined by its equation and the partial derivatives is zero-dimensional (as it is possible to see in the proof of Proposition \ref{caso:singolari4}). In this context the following result is very helpful.
	\begin{theorem}[\cite{CT1988}]
		Let $\mathcal{S}\subset\mathbb{P}^3(\mathbb{K})$ be a singular irreducible cubic surface defined on the field $\mathbb{K}$. Let $\bar{\S}=\S(\overline{\mathbb{K}})$ be the surface defined by $\S$ over $\overline{\mathbb{K}}$, the algebraic closure of $\mathbb{K}$. Let $\delta$ be the number of isolated double points of $\bar{\S}$. Then $\delta\le 4$ and $\mathcal{S}$ is birationally equivalent (over $\mathbb{K}$) to
		\begin{enumerate}[(i)]
			\item $\P^2(\mathbb{K})$ if $\delta=1,4$;
			\item a smooth Del Pezzo surface of degree 4 if $\delta=2$;
			\item a smooth Del Pezzo surface of degree 6 if $\delta=3$.
		\end{enumerate}
	\end{theorem}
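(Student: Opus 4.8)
The plan is to reconstruct the argument of Coray and Tsfasman, whose engine is linear projection from a singular point. If $P\in\bar{\mathcal{S}}$ is an isolated double point, a general line through $P$ meets the cubic at $P$ with intersection multiplicity $2$ and in exactly one further point, so the projection $\pi_P\colon\bar{\mathcal{S}}\dashrightarrow\mathbb{P}^2$ away from $P$ is birational, and it is defined over $\mathbb{K}$ as soon as $P$ is $\mathbb{K}$-rational. I would first record that any two distinct double points are joined by a line lying on $\bar{\mathcal{S}}$, since such a line has contact at least $2+2>3$ with the cubic; combined with the fact that a reduced plane cubic curve has at most three singular points, and with the ADE classification of the exceptional locus of the minimal resolution (a weak Del Pezzo surface of degree $3$, whose $(-2)$-curves span a root sublattice of $E_6$ having at most four connected components), this gives $\delta\le 4$. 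For $\delta=1$ the unique double point is Galois-invariant, hence $\mathbb{K}$-rational, and $\pi_P$ already yields case (i).

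For $\delta=4$ the four double points need not be individually rational, but over $\overline{\mathbb{K}}$ the surface is the Cayley cubic, and its degenerate family of lines --- the six joining the nodes in pairs, plus three more --- is a rigid configuration permuted by $\mathrm{Gal}(\overline{\mathbb{K}}/\mathbb{K})$. The idea is to use this to contract a Galois-stable set of lines on the minimal desingularization down to $\mathbb{P}^2$ (equivalently, to exhibit a $\mathbb{K}$-rational point of $\mathcal{S}$ lying off all nine lines, from which the projection trick applies again), obtaining $\mathbb{K}$-birational triviality and completing case (i).

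For $\delta=2$ and $\delta=3$ I would pass to the minimal resolution $\rho\colon\widetilde{\mathcal{S}}\to\bar{\mathcal{S}}$, which descends to $\mathbb{K}$ because $\mathrm{Sing}(\mathcal{S})$ is $\mathbb{K}$-rational as a closed subscheme; then $\widetilde{\mathcal{S}}$ is a weak Del Pezzo surface of degree $3$ whose exceptional locus is a Galois-stable union of $(-2)$-curves recording the singularity types. Contracting a maximal Galois-stable family of $(-1)$-curves meeting these curves, and checking via the intersection pairing (or a Riemann--Roch count on $\widetilde{\mathcal{S}}$) that the resulting smooth surface is a Del Pezzo surface of degree $4$ when $\delta=2$ and of degree $6$ when $\delta=3$, settles cases (ii) and (iii), since $\mathcal{S}$, $\widetilde{\mathcal{S}}$ and that Del Pezzo surface are pairwise $\mathbb{K}$-birational. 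The main obstacle is arithmetic rather than geometric: the projective geometry of cubic surfaces and of their lines is classical, but one must argue carefully that the blow-downs used when $\delta=4$ and when $\delta=2,3$ can be chosen $\mathrm{Gal}(\overline{\mathbb{K}}/\mathbb{K})$-equivariant, hence defined over $\mathbb{K}$.
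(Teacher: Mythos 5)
There is nothing in the paper to compare your plan against: the statement is quoted from Coray--Tsfasman \cite{CT1988} and used as a black box, and no proof (or sketch) is given in the text. Judged on its own terms, your outline does follow the classical strategy behind the result (projection from a rational double point for $\delta=1$; passing to the minimal resolution, a weak del Pezzo surface of degree $3$, and performing Galois-equivariant contractions of line configurations for $\delta=2,3,4$), which is indeed the spirit of \cite{CT1988} and of Manin's treatment \cite{M1986}. But as written it is a plan with genuine gaps, not a proof.

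Concretely: (1) the fallback you propose for $\delta=4$ --- ``exhibit a $\mathbb{K}$-rational point of $\mathcal{S}$ lying off all nine lines, from which the projection trick applies again'' --- is false, since projection of a cubic surface from a \emph{smooth} point is generically two-to-one; projection is birational only when the center is a double point. (2) Even after contracting a Galois-stable set of six disjoint $(-1)$-curves (the strict transforms of the edge lines of the node tetrahedron), you only obtain a smooth surface with $K^2=9$, i.e.\ a form of $\mathbb{P}^2$ (a Severi--Brauer surface); to conclude case (i) for $\delta=4$ you still need an arithmetic input, e.g.\ that the four nodes give a closed point of degree $4$, prime to $3$, which kills the Brauer obstruction --- your plan does not supply this, and the ``main obstacle'' you flag (Galois-equivariance of the blow-downs) is not the only one. (3) The bound $\delta\le 4$ is asserted via the claim that the relevant root sublattice of $E_6$ has at most four connected components, but that is precisely what has to be proved (it reduces to the fact that $E_6$ contains at most four pairwise orthogonal roots); the geometric facts you list (lines joining pairs of double points lie on the surface, a plane cubic has at most three singular points) do not by themselves complete the argument. (4) Your contraction numerology for cases (ii) and (iii) implicitly assumes the double points are ordinary nodes ($A_1$), whereas the statement allows arbitrary isolated double points; for worse rational double points the exceptional locus has more $(-2)$-curves and the count giving degree $4$, resp.\ $6$, as well as the verification that no $(-2)$-curves survive the contraction (so that the anticanonical class of the resulting surface is actually ample), needs separate treatment.
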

	Recall that a smooth Del Pezzo surface is a smooth projective surface $V$ whose
	anticanonical class is ample. Many arithmetic properties of these surfaces were
	investigated by Manin; see \cite{M1986}.

	What we want to do now is to find a bound of type $q^2+\eta q+\mu$ for the four possible cases of singularities ($\delta=1,2,3,4$).
	
	Clearly the affine singular points on $\mathcal{S}_2$ correspond to the solutions of 
	\begin{equation}
	\label{derivate:sup}
	\begin{cases}
	X_0X_1X_2=AX_0^2+A^qX_1^2+A^{q^2}X_2^2+BX_0+B^qX_1+B^{q^2}X_2+D\\
	X_1X_2=2AX_0+B\\
	X_0X_2=2A^qX_1+B^q\\
	X_0X_1=2A^{q^2} X_2+B^{q^2}
	\end{cases}	
	\end{equation}

	\begin{remark}
		$\S_2$ has no singular point at the infinity.
	\end{remark}
	\begin{proof}
		A straightforward computation shows that the singular points at the infinity of $\S_2$ satisfy the following system of equations
		\[
		\begin{cases}
		X_0X_1X_2=0\\
		X_1X_2=0\\
		X_0X_2=0\\
		X_0X_1=0\\
		AX_0^2+A^qX_1^2+A^{q^2}X_2^2=0
		\end{cases}	
		\]
		which admits only $(0:0:0:0)$ as solution, which is not a point of the projective space.
	\end{proof}

	\begin{remark}
		\label{remark:singolarirazionali}
		Since $\mathcal{S}_1$ is defined over $\F_q$ if $P\in{\mathcal{S}}_1(\F_q)$ is a singular point then its conjugates with respect to the Frobenius automorphism are also singular. 
	\end{remark}
	
	\begin{remark}
		Notice also that if a singular point of $\S_2$ is $\F_{q^6}$-rational the corresponding singularity of $\S_1$ will be $\F_{q^2}$-rational since $(x_0\alpha+x_1\alpha^q+x_2\alpha^{q^2})^{q^6}=(x_0\alpha+x_1\alpha^q+x_2\alpha^{q^2})^{q^2}$.
	\end{remark}
	
	Before considering the classification of the four cases arising from different values of $\delta$, we need to examine separately the case $B=0$, which turns out ot to be special.
	
	\subsection{Case B=0}
	In this case the singularities of the surface correspond to the solutions of 
	\begin{equation}
	\label{sistema:b0}
	\begin{cases}
	X_0X_1X_2=AX_0^2+A^qX_1^2+A^{q^2}X_2^2+D\\	
	X_1X_2=2AX_0\\
	X_0X_2=2A^qX_1\\
	X_0X_1=2A^{q^2} X_1\\
	\end{cases}
	\end{equation}
	
	A direct computation leads to the fact that if $(\bar{x}_0,\bar{x}_1,\bar{x}_2)\ne(0,0,0)$ is a singular point, then each $\bar{x}_i$ is different from zero.
	
	\begin{proposition}
		The only possible singularities for the case $B=0$ are described in the following list:
		\begin{enumerate}[(i)]
			\item if $D=0$ then, for any $q$, $\S_2$ admits only $P=(0,0,0)$ as singular point;
			\item if $D\ne0$ and $q$ is odd then $\delta=4$ and the singular points are given by $(\gamma,\gamma^q,\gamma^{q^2})$, $(\gamma,-\gamma^q,-\gamma^{q^2})$, $(-\gamma,\gamma^q,-\gamma^{q^2})$, $(-\gamma,-\gamma^q,\gamma^{q^2})$, where $\gamma=2A^{\frac{q^2+q}{2}}$.
		\end{enumerate}
	\end{proposition}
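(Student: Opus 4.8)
The plan is to work throughout with the affine singularity system on $\mathcal{S}_2$ for $B=0$ — the surface equation together with its three partial derivatives — and to use the two facts already in place: $\mathcal{S}_2$ has no singular point at infinity, and a singular point other than the origin has all three coordinates nonzero. The two items correspond to the cases $D=0$ and $D\neq 0$, and everything rests on one elementary step: multiplying the $i$-th derivative equation by $X_i$ gives $X_0X_1X_2 = 2A^{q^i}X_i^2$ for $i=0,1,2$, so that at any singular point with nonzero coordinates
\[
AX_0^2 \;=\; A^qX_1^2 \;=\; A^{q^2}X_2^2 \;=\; \tfrac12\,X_0X_1X_2 .
\]

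For item (i) I would take $D=0$ and suppose, for contradiction, that a singular point with all coordinates nonzero exists. Substituting the displayed relations into the surface equation gives $X_0X_1X_2 = \tfrac32\,X_0X_1X_2$, hence $X_0X_1X_2=0$ — a contradiction. In characteristic $2$ one argues even more directly: the three derivative equations read $X_1X_2=X_0X_2=X_0X_1=0$, forcing two coordinates to vanish, whereupon the surface equation forces the third to vanish as well. Since the origin lies on $\mathcal{S}_2$ precisely when $D=0$ and is plainly singular there, this establishes (i) for every $q$.

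For item (ii), with $D\neq 0$ and $q$ odd, the origin is not on $\mathcal{S}_2$, so every singular point again has all coordinates nonzero. Now the substitution into the surface equation gives $X_0X_1X_2 = \tfrac32\,X_0X_1X_2 + D$, i.e. $X_0X_1X_2 = -2D$, while multiplying the three derivative equations and cancelling $X_0X_1X_2\neq 0$ gives $X_0X_1X_2 = 8\,\mathrm{N}(A)$ (so in particular $D=-4\,\mathrm{N}(A)$). Feeding $X_0X_1X_2 = 8\,\mathrm{N}(A)$ back into $AX_0^2 = \tfrac12 X_0X_1X_2$ yields $X_0^2 = 4\,\mathrm{N}(A)/A = 4A^{q+q^2}$, and cyclically $X_1^2 = 4A^{1+q^2}$, $X_2^2 = 4A^{1+q}$. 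With $\gamma := 2A^{(q^2+q)/2}$ — which makes sense because $q$ is odd — one checks $\gamma^2 = 4A^{q+q^2}$, $(\gamma^q)^2 = 4A^{1+q^2}$ and $(\gamma^{q^2})^2 = 4A^{1+q}$ using $A^{q^3}=A$, so necessarily $X_0=\pm\gamma$, $X_1=\pm\gamma^q$, $X_2=\pm\gamma^{q^2}$. It then remains to decide which of the $2^3$ sign choices solve the derivative equations: writing $X_i=\mu_i\gamma^{q^i}$ with $\mu_i\in\{1,-1\}$, substituting into $X_1X_2 = 2AX_0$ and its two cyclic analogues, and simplifying with $\gamma^q\gamma^{q^2}=\mathrm{N}(\gamma)/\gamma$ and $\mathrm{N}(\gamma)=8\,\mathrm{N}(A)^{(q^2+q)/2}$, the three conditions collapse to a single constraint on the product $\mu_0\mu_1\mu_2$; this leaves exactly four of the eight sign choices, namely the four points in the statement. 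Finally I would verify that each of these four points lies on $\mathcal{S}_2$, that they are pairwise distinct (again using $q$ odd), and that the Jacobian ideal of $\mathcal{S}_2$ is zero-dimensional, so that they are isolated double points; the inequality $\delta\le 4$ is then immediate from \cite{CT1988}.

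The main obstacle is the last step of (ii): the exponent bookkeeping that pins down the admissible sign pattern. One must reduce the exponents of $A$ modulo $q^3-1$ (and those of $\mathrm{N}(A)$ modulo $q-1$, since $\mathrm{N}(A)\in\F_q$), keep track of the scalar factors $2^{1+q+q^2}=8$ and $4=2^2$, and use that $(q^2+q)/2$ is an integer only in odd characteristic — it is exactly here that characteristic $2$ behaves differently, which is why it is excluded in item (ii). Everything before that step is short: once $X_0X_1X_2 = 8\,\mathrm{N}(A)$ is available, the squares $X_i^2$ and hence the candidate points are forced.
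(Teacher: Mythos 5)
Your proposal follows essentially the same route as the paper's proof: use the three derivative equations (multiplied by the coordinates) to force $AX_0^2=A^qX_1^2=A^{q^2}X_2^2=\tfrac12X_0X_1X_2$, feed this into the surface equation, and solve for the coordinates up to sign. In two respects you are actually more careful than the published argument: you handle characteristic $2$ in item (i) explicitly, and in item (ii) you extract the compatibility condition $X_0X_1X_2=8\mathrm{N}(A)=-2D$, hence $D=-4\mathrm{N}(A)$, which the paper hides inside the unexplained (and factor-of-$4$ inconsistent) identity $-D/A=A^{q^2+q}$; your version $X_0^2=4A^{q+q^2}$, etc., is the one that actually matches $\gamma=2A^{(q^2+q)/2}$.

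The one genuine gap is precisely the step you postpone as ``exponent bookkeeping.'' Carrying it out, with $X_i=\mu_i\gamma^{q^i}$, one finds $\gamma^q\gamma^{q^2}=2A\gamma\cdot A^{(q+2)(q^3-1)/2}$, and since $q+2$ is odd this equals $\varepsilon\cdot 2A\gamma$ with $\varepsilon=A^{(q^3-1)/2}=\mathrm{N}(A)^{(q-1)/2}=\pm1$; the three derivative equations then collapse to the single constraint $\mu_0\mu_1\mu_2=\varepsilon$. So you do get exactly four admissible sign patterns, but they are the four listed in the statement only when $A$ is a square in $\F_{q^3}$; for a nonsquare $A$ the singular points are the complementary four sign choices, and your concluding check that ``each of these four points lies on $\mathcal{S}_2$'' would fail there (the surface equation gives the same constraint $\mu_0\mu_1\mu_2\varepsilon=1$). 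To be fair, the paper's own proof waves through the identical point — it simply asserts the four points ``from the equation of the surface'' — so your sketch is no weaker than the published one; but as a blind proof of the statement as written, the sign determination is the step that cannot be asserted, and once done honestly it shows the listed points are correct only up to the quadratic character of $A$ (equivalently of $\mathrm{N}(A)$). Everything else in your outline — distinctness of the four points for $q$ odd, isolatedness, and $\delta\le4$ — is fine.
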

	
	\begin{proof}
		Direct computations show that $(i)$ comes from Equation (\ref{sistema:b0}), so we are left with the case $q$ odd and $(0,0,0)$ not singular. Substituting the derivatives into the equation that defines the surface we get
		\[
		\begin{cases}
		-AX_0^2+A^qX_1^2+A^{q^2}X_2^2+D=0\\
		AX_0^2-A^qX_1^2+A^{q^2}X_2^2+D=0\\
		AX_0^2+A^qX_1^2-A^{q^2}X_2^2+D=0.
		\end{cases}
		\]
		
		Summing pairwise the equations gives us
		
		\[
		\begin{cases}
		2AX_0^2+2D=0\\
		2A^qX_1^2+2D=0\\
		2A^{q^2}X_2^2+2D=0
		\end{cases}
		\]
		
		and, since $q$ is odd and $A\ne0$
		
		\begin{equation}
		\label{sist:b=0}
		\begin{cases}
		X_0^2=-\frac{D}{A}\\
		X_1^2=-\frac{D}{A^q}\\
		X_2^2=-\frac{D}{A^{q^2}}
		\end{cases}
		\end{equation}
		
		The fact that $\beta\in\F_{q}$ is a square if and only if $\beta^q$ is a square implies that all the equations of (\ref{sist:b=0}) are solvable if and only if the first one is. Therefore (\ref{sist:b=0}) is solvable if and only if $-\frac{D}{A}$ is a square of $\F_{q^3}$, but this is always true since $-\frac{D}{A}=A^{q^2+q}$ has is an even power of $A$.
		From the equation of the surface it follows that $\S_2$ has four singularities of the form $(\gamma,\gamma^q,\gamma^{q^2})$, $(\gamma,-\gamma^q,-\gamma^{q^2})$, $(-\gamma,\gamma^q,-\gamma^{q^2})$, $(-\gamma,-\gamma^q,\gamma^{q^2})$, where $\gamma=2A^{\frac{q^2+q}{2}}$.
		
	\end{proof}
	
	\begin{remark}
		Notice that in case $D\ne0$ and $q$ odd, the four singular points cannot be all distinct conjugates with respect to the Frobenius automorphism. This comes from the explicit representation that was given above, and from the fact that if $\gamma^q=\pm \gamma$ then each point can have at most one different conjugate.
	\end{remark}
	
	\subsection{One singular point}
	From now on we can consider $B\ne0$. From Remark \ref{remark:singolarirazionali} if $\mathcal{S}_1$ has one singular (double) point $P$ then $P$ has to be $\F_{q}$-rational, otherwise also its conjugate should be singular. Consider now the sheaf of $\F_{q}$-rational lines passing through $P$: each line, not contained in $\S_1(\F_q)$, can intersect $\mathcal{S}_1(\F_q)$ in at most one more point, since $P$ is a double point and $\mathcal{S}_1$ has degree three. So the number of $\F_{q}$-rational points of $\mathcal{S}_1$ is given by
	
	\begin{equation*}
	|S_1(\F_{q})|\le q^2+h(q-1)=q^2+hq-h
	\end{equation*}
	
	where $h$ is the number of lines contained in $\S_1$ and passing through $P$.
	
	\begin{proposition}
		With the same notation as before we have $h=0$.
	\end{proposition}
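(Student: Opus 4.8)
The plan is to prove the stronger assertion that $\S_1$ contains \emph{no} $\F_q$-rational line at all, in particular none through $P$, by examining $\S_1$ along the plane at infinity. Only $\F_q$-rational lines matter for the bound $|S_1(\F_q)|\le q^2+h(q-1)$: if $\ell\subseteq\S_1$ is a line through $P$ that is not $\F_q$-rational then, writing $\phi$ for the $q$-Frobenius, the lines $\ell$ and $\phi(\ell)$ are distinct and both pass through $\phi(P)=P$ (here we use that $\S_1$ is defined over $\F_q$ and $P\in\S_1(\F_q)$), so $\ell\cap\phi(\ell)=\{P\}$ and $\ell$ carries no $\F_q$-rational point besides $P$. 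Hence $h$ may be taken to be the number of $\F_q$-rational lines of $\S_1$ through $P$, and it is these that must be ruled out.

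First I would isolate the leading form of the polynomial defining $\S_1$. Reading off the product form of Equation~(\ref{EqSuperficie1}) (the reformulation displayed just after it), every summand other than the first has degree at most $2$ in $x_0,x_1,x_2$, so the homogeneous degree-$3$ part of the defining polynomial is
\[
-(x_0\alpha+x_1\alpha^q+x_2\alpha^{q^2})\,(x_0\alpha^q+x_1\alpha^{q^2}+x_2\alpha)\,(x_0\alpha^{q^2}+x_1\alpha+x_2\alpha^q).
\]
Consequently the intersection of the projective closure $\overline{\S_1}$ with the plane at infinity is the union of the three Galois-conjugate lines cut out by these linear factors.

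Now I would argue by contradiction: suppose $\ell\subseteq\S_1$ is an $\F_q$-rational line through $P$. Since $P$ is an affine point, $\ell$ is not contained in the plane at infinity, so $\overline{\ell}$ meets that plane in a single point, necessarily $\F_q$-rational, say $(v_0:v_1:v_2:0)$ with $(v_0,v_1,v_2)\in(\F_q)^3\setminus\{(0,0,0)\}$. As $\overline{\ell}\subseteq\overline{\S_1}$, this point annihilates the degree-$3$ form above:
\[
(\alpha v_0+\alpha^q v_1+\alpha^{q^2}v_2)(\alpha^q v_0+\alpha^{q^2}v_1+\alpha v_2)(\alpha^{q^2}v_0+\alpha v_1+\alpha^q v_2)=0.
\]
Set $\mu:=\alpha v_0+\alpha^q v_1+\alpha^{q^2}v_2=\Phi_{\mathcal B}(v_0,v_1,v_2)$. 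Since the $v_i$ lie in $\F_q$ and $\alpha^{q^3}=\alpha$, raising $\mu$ to the $q$-th and $q^2$-th powers yields exactly the second and third factors, so the identity becomes $\mathrm N(\mu)=\mu^{1+q+q^2}=0$, forcing $\mu=0$. But $\mathcal B=\{\alpha,\alpha^q,\alpha^{q^2}\}$ is a basis of $\F_{q^3}$ over $\F_q$, so $\Phi_{\mathcal B}$ is injective and $\mu=0$ gives $(v_0,v_1,v_2)=(0,0,0)$, a contradiction. Hence $h=0$. (Equivalently, one may transport the argument to $\S_2$ via $\psi$: an $\F_q$-rational line of $\S_1$ is carried to a line of $\S_2$ whose point at infinity has the shape $(\gamma:\gamma^q:\gamma^{q^2}:0)$ with $\gamma\in\F_{q^3}\setminus\{0\}$, which cannot lie on $\S_2\cap\{X_3=0\}=\{X_0X_1X_2=0\}$.)

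I do not expect a genuine analytic obstacle here; the one point needing care is the reduction to $\F_q$-rational lines at the outset. Over $\overline{\F}_q$ the cubic $\S_1$ really does carry lines through its double point (for instance a node of a cubic surface has six lines of the surface through it), so $\F_q$-rationality --- which is exactly what lets us recognise the leading form as a norm, vanishing only at the origin --- cannot be dropped. Note finally that the argument uses neither $B\neq 0$ nor the hypothesis that $\S_1$ has a unique singular point.
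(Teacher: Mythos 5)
Your proof is correct and is essentially the paper's argument: the paper parametrizes a candidate line through the singular point of $\S_2$ with direction $(b,b^q,b^{q^2})$ and observes that the $t^3$-coefficient is $\mathrm{N}(b)$, which cannot vanish for $b\neq 0$, and this is exactly your observation that the leading cubic form is the norm form, which has no nontrivial $\F_q$-rational (equivalently, $(\gamma,\gamma^q,\gamma^{q^2})$-shaped) zeros. Your packaging via the point at infinity of the line (plus the explicit reduction to $\F_q$-rational lines) is a slightly cleaner way of saying the same thing and, as you note, never uses $B\neq0$ or uniqueness of the singular point, but the key step coincides with the paper's.
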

	\begin{proof}
		We want to give a bound for the maximal number of ($\F_{q}$-rational) lines contained in $\S_1$ and passing through $P$. For simplicity we proceed with the computations on $\S_2$, since the number of these lines will be the same. Suppose that the corresponding singular point $Q$ on $\S_2$ has coordinates $(a,a^q,a^{q^2})$. Then, since it is the only singular point, we have that $Q$ is the only point that satisfies (\ref{derivate:sup}).
		Consider now the sheaf of lines passing through $Q$, which has parametric equation, for $b\ne0$
		\[
		\begin{cases}
		X_0=bt+a\\
		X_1=b^qt+a^q\\
		X_2=b^{q^2}t+a^{q^2}
		\end{cases}
		\]
		and after doing the substitution we get that, if the line is contained into $\S_2$,
		\[
		p_3t^3+p_2t^2+p_1t
		\]
		has to be the zero polynomial in $\F_{q}[t]$, where
		\[
		\begin{split}
		&p_3=b^{q^2+q+1},\\
		&p_2=-Ab^2 - (Ab^2)^q - (Ab^2)^{q^2} + b^{q+1}a^{q^2} + b^{q^2+1}a^q + b^{q^2+q}a,\\
		&p_1=-2Aab - 2(Aab)^q - 2(Aab)^{q^2} - Bb - (Bb)^q - (Bb)^{q^2} + ba^{q^2+q} + b^qa^{q^2+1} + b^{q^2}a^{q+1}.
		\end{split}
		\]
		From the fact that $p_3=0$ we have that $\mathrm{N}(b)$ has to be equal to zero, but this means that $b$ is equal to zero, which is a contradiction.
	\end{proof}
	
	Putting together the previous observations we have the following result.
	
	\begin{proposition}
		If $\S_1$ has one singular $\F_{q}$-rational point then 
		\begin{equation}
		\label{stima1punto}
		|S_1(\F_q)|\le q^2.
		\end{equation}
	\end{proposition}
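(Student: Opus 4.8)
The plan is to deduce the bound directly from the two preceding results of this subsection, so the proof is a one-line combination rather than a fresh argument. First I would recall the inequality
\[
|S_1(\F_q)| \le q^2 + h(q-1)
\]
already derived above from the pencil of $\F_q$-rational lines through the singular point $P$: by Remark~\ref{remark:singolarirazionali} a unique singular point of $\S_1$ must be $\F_q$-rational, and since $P$ is a double point on the absolutely irreducible cubic $\S_1$, every $\F_q$-rational line through $P$ not contained in $\S_1$ meets $\S_1(\F_q)$ in at most one further point, while the $h$ lines through $P$ that do lie on $\S_1$ account for the term $h(q-1)$. Then I would quote the immediately preceding proposition, which shows $h=0$.

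Substituting $h=0$ into the displayed bound gives $|S_1(\F_q)| \le q^2$, which is exactly the claim, and there is nothing further to check. I would not anticipate any obstacle at this stage: the substance lies entirely in the proof that $h=0$, which in turn rested on transferring the count to $\S_2$ (permissible by Remark~\ref{remark:senso}, since the linear change of variables $\psi$ preserves the number of lines through the singular point) and on the observation that the cubic cut out on a line through $Q=(a,a^q,a^{q^2})$ with direction $(b,b^q,b^{q^2})$ has leading coefficient $p_3=\mathrm{N}(b)=b^{q^2+q+1}$, which cannot vanish for $b\ne 0$. Hence the present statement is a formal corollary of the two propositions above.
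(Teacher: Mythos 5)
Your proposal matches the paper exactly: the proposition is stated there as a direct consequence ("putting together the previous observations") of the bound $|S_1(\F_q)|\le q^2+h(q-1)$ from the pencil-of-lines argument and the preceding proposition that $h=0$. Substituting $h=0$ is all that is needed, and your recollection of why $h=0$ (the transfer to $\S_2$ and the nonvanishing of $p_3=\mathrm{N}(b)$) is also faithful to the paper.
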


	\subsection{Two singular points}
	Call $P_1$ and $P_2$ the two singular points of $\mathcal{S}_1$, from Remark \ref{remark:singolarirazionali} there are two possibilities:
	\begin{enumerate}[(i)]
		\item $P_1$ and $P_2$ are $\F_q$-rational;
		\item $P_1$ and $P_2$ are $\F_{q^2}$-rational and conjugates.
	\end{enumerate}
	If $(i)$ happens then we can give similar argumentation as in Section 5.2 and get the bound $|S_1(\F_{q})|\le q^2+q-1$.
	
	We look for a bound when $(ii)$ happens: call $r$ the line passing through $P_1$ and $P_2$, since it fixes the conjugate points then it has to be $\F_q$-rational and moreover this line has to be contained in $\mathcal{S}_1(\F_q)$ since the intersection multiplicity of this line is at least $2$ in both $P_1$ and $P_2$ and the surface has degree 3. Now consider the pencil of planes passing through $r$ and consider the cubic curve $\mathcal{C}$ defined as intersection between any of these planes and $\mathcal{S}_1$. Clearly $\mathcal{C}$ is reducible and there are two possible situations:
	\begin{enumerate}
		\item $\mathcal{C}$ is completely reducible. In this case $\mathcal{C}$ is the product of three lines contained in the surface. Call $s$ and $s^\prime$ the two lines different from $r$: $s$ and $s^\prime$ cannot be $\F_q$-rational since they do not fix the conjugates, so they are $\F_{q^2}$-rational. From the fact that they are contained in $\mathcal{S}_1$ and they pass through conjugate points we have that $s^\prime=s^q$. From this fact we have that the number of $\F_{q}$-rational points on $\mathcal{C}\setminus r$ is 1 and that point is $s\cap s^\prime$.
		
		\begin{center}
			\includegraphics[width=0.3\textwidth]{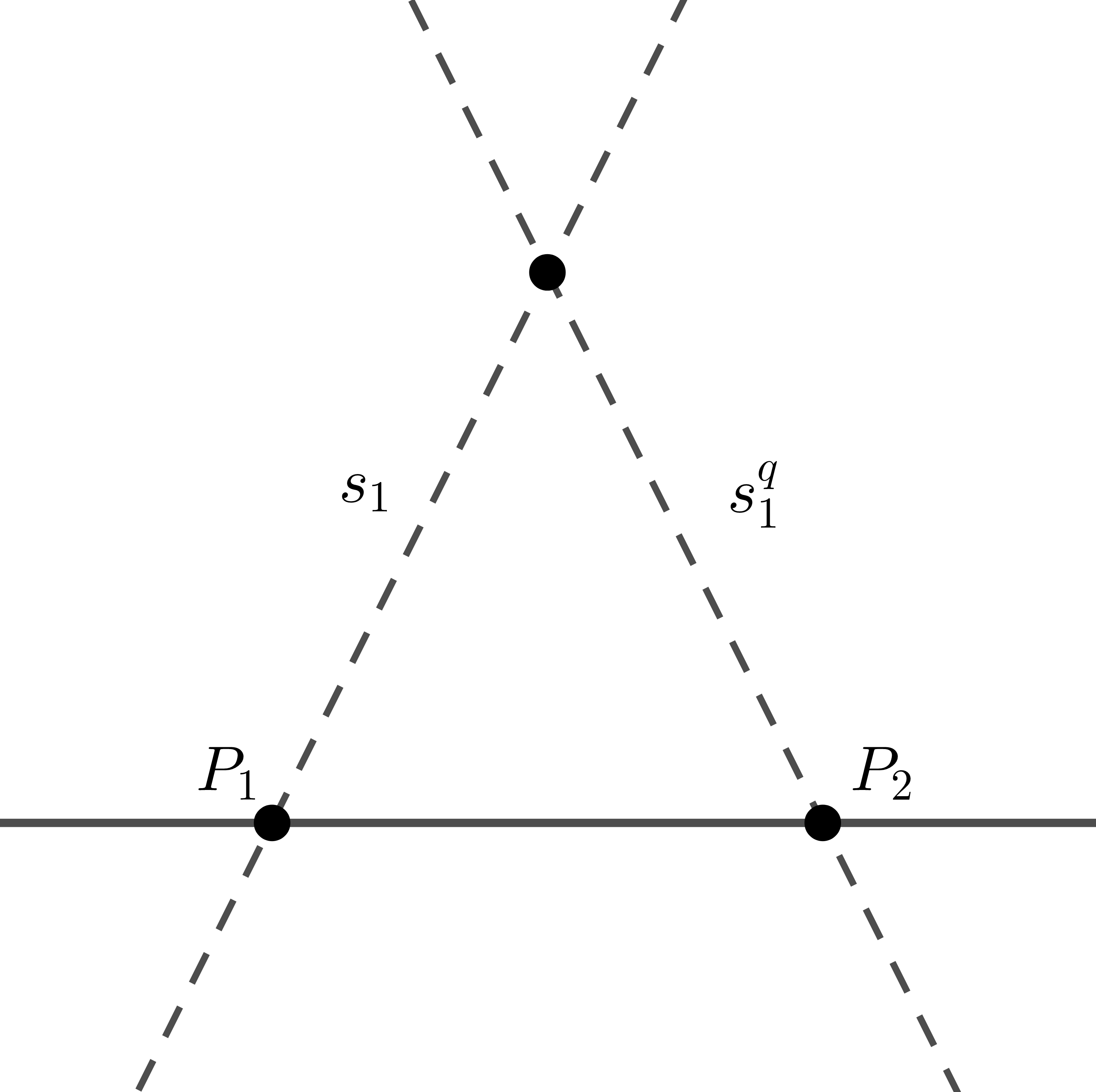}
		\end{center}
		\item $\mathcal{C}$ is the product of $r$ and an irreducible conic $\mathcal{D}$ contained in the surface and it contains exactly $q$ points, see \cite[Lemma 7.2.3]{Hirschfeld1998}. In this case the number of $\F_q$ rational points of $\mathcal{D}$ not contained in $r$ is exactly $q-2$.
		\begin{center}
			\includegraphics[width=0.3\textwidth]{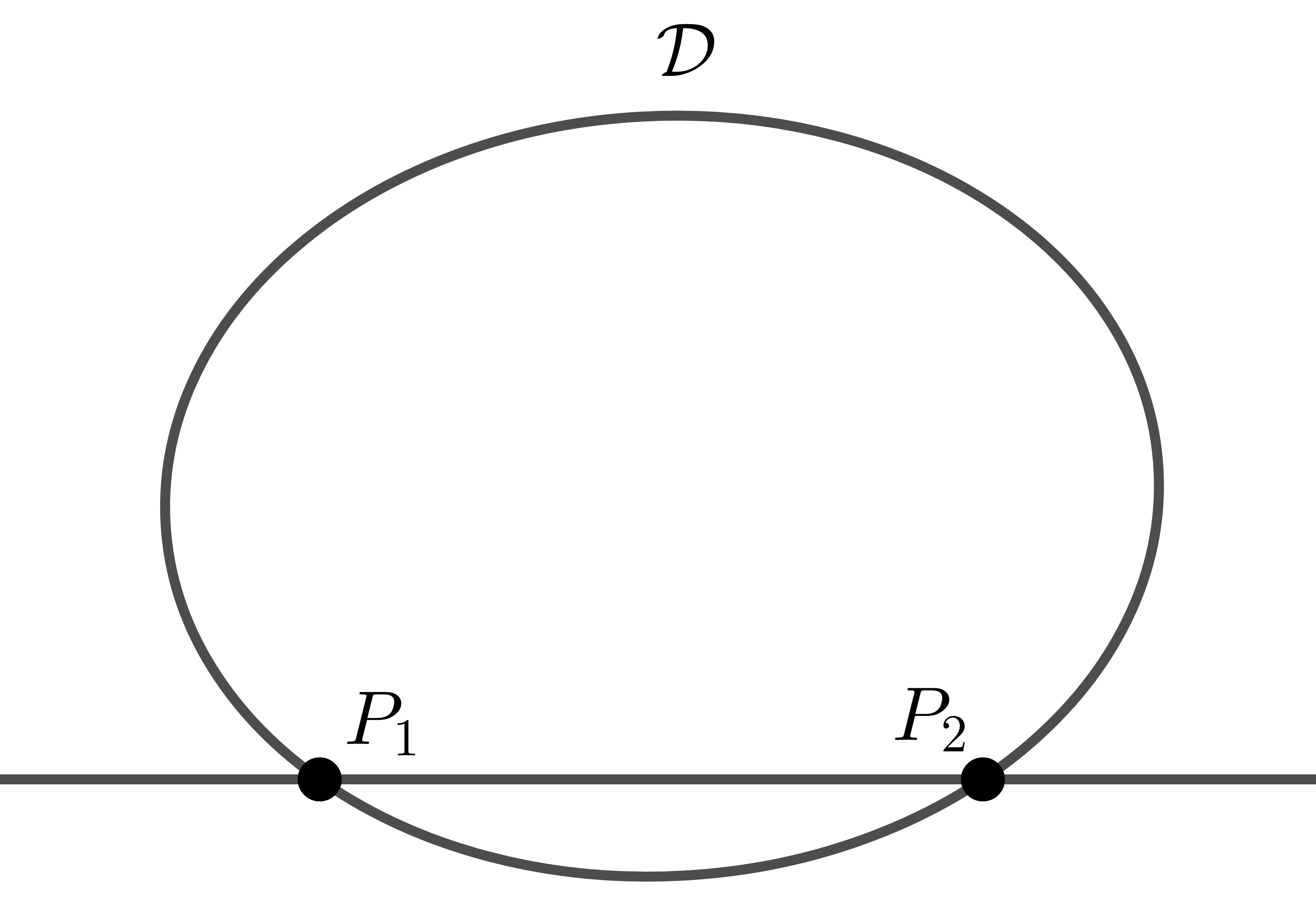}
		\end{center}
	\end{enumerate}
	
	From the analysis of the two possible cases, recalling that the maximum number of lines contained in a cubic surface is 27 (see \cite[Chapter IV]{M1986}), the first situation can happen at most in 13 cases, and so we have:
	
	\[
	q+(q-13)(q-2)+13\le |S_1(\F_q)|\le q(q-2)+q
	\]
	
	Putting together the previous observations we have the following result.
	
	\begin{proposition}
		If $\S_1$ has two singular $\F_{q^2}$-rational conjugate points then 
		\begin{equation}
		\label{stima2punti}
		q^2-14q+39\le |S_1(\F_q)|\le q^2-q.
		\end{equation}
	\end{proposition}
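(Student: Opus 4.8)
The plan is to assemble the observations already made in this subsection, all under hypothesis (ii). Write $\sigma$ for the Frobenius over $\F_q$, so that $\sigma$ interchanges the two singular points $P_1,P_2$. Then the line $r=\overline{P_1P_2}$ is $\sigma$-invariant, hence $\F_q$-rational; and since $r$ meets the cubic $\mathcal S_1$ with multiplicity at least $2$ at each of the double points $P_1,P_2$, i.e.\ with total multiplicity at least $4>3=\deg\mathcal S_1$, Bézout forces $r\subset\mathcal S_1$. I would then count $|\mathcal S_1(\F_q)|$ through the pencil of $\F_q$-rational planes containing $r$: this pencil has $q+1$ members, every $\F_q$-point of $\mathcal S_1$ lying off $r$ sits on exactly one of them (the plane it spans together with $r$), and $r$ itself contributes its $q+1$ rational points. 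So the task reduces to controlling, plane by plane, the rational points of $\mathcal S_1$ lying on $\pi$ but not on $r$.

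For $\pi$ in the pencil, $\mathcal C_\pi:=\pi\cap\mathcal S_1$ is a plane cubic containing $r$, hence $\mathcal C_\pi=r+\mathcal D_\pi$ for a residual conic $\mathcal D_\pi$ defined over $\F_q$; moreover $\mathcal C_\pi$ is singular at both $P_1$ and $P_2$, because $\mathcal S_1$ is. Two cases arise, exactly as in the discussion above. If $\mathcal C_\pi$ splits into three lines, then the two lines $s,s'$ other than $r$ cannot be $\F_q$-rational — an $\F_q$-rational line through $P_1$ would be carried by $\sigma$ to an $\F_q$-rational line through $P_2$, forcing it to coincide with $r$ — so $\sigma$ swaps them, $s'=s^q$, and the only $\F_q$-point of $s\cup s'$ is $s\cap s'$; this $\pi$ contributes exactly one rational point off $r$. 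If instead $\mathcal D_\pi$ is an irreducible conic on $\mathcal S_1$, then by \cite[Lemma 7.2.3]{Hirschfeld1998} it carries exactly $q$ rational points, two of which lie on $r$, so this $\pi$ contributes exactly $q-2$ rational points off $r$.

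Finally I would bound how often the first case can occur: each such plane yields a $\sigma$-conjugate pair of lines of $\mathcal S_1$ distinct from $r$, and as a cubic surface contains at most $27$ lines (\cite[Chapter IV]{M1986}) with $r$ among them, at most $\lfloor 26/2\rfloor=13$ members of the pencil fall into this case. Since (for $q\ge 3$) a first-case plane contributes no more off-$r$ points than a second-case plane, the extreme values of $|\mathcal S_1(\F_q)|$ are obtained by maximizing, resp.\ minimizing, the number of first-case planes, which gives
\[
q+(q-13)(q-2)+13\ \le\ |\mathcal S_1(\F_q)|\ \le\ q(q-2)+q,
\]
and expanding the two ends yields (\ref{stima2punti}). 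I expect the main obstacle to be not this final bookkeeping but the case analysis for the plane sections — in particular verifying that the residual conic is always one of the two stated types, and pinning down the position of its rational points relative to $r$ (including that the conjugate lines $s,s'$ meet in a single $\F_q$-point and that the irreducible conic meets $r$ in exactly two non-$\F_q$-rational points); once that is settled, the estimate follows by counting.
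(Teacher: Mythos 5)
Your proposal is correct and follows essentially the same route as the paper: the $\F_q$-rational line $r$ through the conjugate double points lies on $\mathcal{S}_1$, the count is organized via the pencil of planes through $r$, the plane sections split either into $r$ plus a conjugate pair of lines (one rational point off $r$) or $r$ plus an irreducible conic ($q-2$ rational points off $r$), and the 27-line bound limits the first case to at most 13 planes, giving exactly the paper's estimate $q+(q-13)(q-2)+13\le |\mathcal{S}_1(\F_q)|\le q(q-2)+q$. No essential difference from the paper's argument.
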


	\subsection{Three singular points}
	Call $P_1$, $P_2$ and $P_3$ the singular points of $\mathcal{S}_1$, from Remark \ref{remark:singolarirazionali} we have the following configurations:
	\begin{enumerate}[(i)]
		\item At least one among $P_1$, $P_2$ and $P_3$ is $\F_{q}$-rational;
		\item $P_1$, $P_2$ and $P_3$ are $\F_{q^3}$-rational and conjugates.
	\end{enumerate}
	If $(i)$ happens then we can give similar argumentation as in Section 5.2 and get the bound $|S_1(\F_{q})|\le q^2+2q-2$.
	
	We start with observing that the three points cannot be collinear, which comes directly from the following proposition.
	\begin{proposition}
		Let $\mathcal{C}$ be a cubic curve such that it has three double points. Then $\mathcal{C}$ is completely reducible and splits in the product of three lines, each passing through a pair of its singular points.
	\end{proposition}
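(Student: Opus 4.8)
The plan is to use B\'ezout's theorem to force the line through any two of the three double points to be a component of $\mathcal{C}$, and then to check that the three lines so obtained are pairwise distinct, hence exhaust $\mathcal{C}$.

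First I would label the three double points $P_1,P_2,P_3$ (they are distinct by hypothesis) and, for each pair, consider the line $\ell_{ij}$ through $P_i$ and $P_j$. Since $\mathcal{C}$ has multiplicity $2$ at $P_i$ and at $P_j$, the intersection multiplicity of $\ell_{ij}$ with $\mathcal{C}$ is at least $2$ at $P_i$ and at least $2$ at $P_j$, so the total intersection number of $\ell_{ij}$ and $\mathcal{C}$ is at least $4>3=\deg\mathcal{C}$. By B\'ezout's theorem $\ell_{ij}$ cannot meet $\mathcal{C}$ in finitely many points, so it is a component of $\mathcal{C}$. Carrying this out for all three pairs yields three lines $r_1=\ell_{23}$, $r_2=\ell_{13}$, $r_3=\ell_{12}$, each a component of $\mathcal{C}$ and each passing through two of the singular points.

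Next I would prove that $r_1,r_2,r_3$ are pairwise distinct. If two of them coincided, say $r_1=r_2=:r$, then $P_1,P_2,P_3$ would all lie on $r$; writing $\mathcal{C}=r\cup Q$ with $Q$ a conic and using that multiplicities add along components, at each $P_k$ we would get $1+\mathrm{mult}_{P_k}(Q)=\mathrm{mult}_{P_k}(\mathcal{C})=2$, hence $P_k\in Q$ for $k=1,2,3$. A conic through three collinear points contains that line, so $Q=r\cup r'$ and $\mathcal{C}=2r+r'$; but such a curve is singular at every point of $r$, contradicting the hypothesis that $\mathcal{C}$ has only the three double points $P_1,P_2,P_3$. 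Therefore $r_1,r_2,r_3$ are three distinct lines. Consequently the three (pairwise non-proportional) linear forms cutting them out each divide the cubic defining $\mathcal{C}$, so their product divides it as well, and by degree count the defining cubic equals this product up to a nonzero scalar. Thus $\mathcal{C}=r_1\cup r_2\cup r_3$, i.e. $\mathcal{C}$ is completely reducible into three lines, each passing through a pair of its singular points.

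The one genuinely delicate point is ruling out the coincidence $r_i=r_j$, that is, the collinear configuration of $P_1,P_2,P_3$; this is exactly the non-collinearity fact the paper extracts immediately afterwards. It relies on reading ``$\mathcal{C}$ has three double points'' as ``$\mathcal{C}$ has exactly three singular points, each of multiplicity two,'' so that the non-reduced arrangement $2r+r'$ (which is singular along an entire line) is excluded. Everything else is a routine application of B\'ezout's theorem together with unique factorization of the defining form.
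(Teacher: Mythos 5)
Your argument is correct and follows the same route as the paper, whose proof is simply the remark ``direct consequence of B\'ezout's theorem'': you apply B\'ezout to force each line through a pair of double points to be a component, and your extra care in ruling out the degenerate collinear configuration $2r+r'$ (under the reading that the singular locus is exactly three double points) is a sound filling-in of what the paper leaves implicit. No gap; it is essentially the paper's proof written out in full.
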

	\begin{proof}
		Direct consequence of B\'ezout's theorem.
	\end{proof}
	In order to get an estimation of $|\S_1(\F_q)|$ for (ii) we change the model of the surface as the following proposition suggests.
	
	\begin{proposition}
		Let $\mathcal{S}$ be a cubic surface over $\P^3(\F_q)$, considered with projective coordinates $[r_0:r_1:r_2:T]$, and such that it has exactly three conjugates $\F_{q^3}$-rational double points, namely $P_1,P_2$ and $P_3$. Then $\mathcal{S}$ is projectively equivalent to the surface having affine equation, for certain $\beta,\gamma\in\F_{q^3}$
		\[
		r_0r_1r_2+\beta r_0r_1+\beta^qr_1r_2+\beta^{q^2}r_0r_2+\gamma r_0+\gamma^qr_1+\gamma^{q^2}r_2=0.
		\]
	\end{proposition}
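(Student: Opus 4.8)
The plan is to put the three double points into a standard position at infinity, exactly in the spirit of the change of coordinates $\psi$ (the matrix $M$) used to pass from $\mathcal{S}_1$ to $\mathcal{S}_2$ in Section~4, and then to read off the shape of the equation from the double‑point conditions together with the Frobenius action.

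First I would recall that $P_1,P_2,P_3$ are not collinear (noted just above), so they span a plane $\pi$. Next I would fix a fourth point: $\mathcal{S}$ has an $\F_q$‑rational point $P_0$ lying off $\pi$ — for instance because $|\mathcal{S}(\F_q)|=q^2+O(q)$ exceeds the number $O(q)$ of $\F_q$‑points of the plane cubic $\mathcal{S}\cap\pi$ (one in fact checks that $\mathcal{S}\cap\pi$ is the triangle formed by the three Frobenius‑conjugate lines joining the $P_i$, hence has no $\F_q$‑point at all). Now choose a linear change of coordinates $[r_0:r_1:r_2:T]$ defined over $\F_{q^3}$, built from a normal element $\alpha$ analogously to $M$ of Section~4, sending $P_1,P_2,P_3$ to the vertices $[1:0:0:0],[0:1:0:0],[0:0:1:0]$ of the plane at infinity $\{T=0\}$ (possible since $\pi=\langle P_1,P_2,P_3\rangle$) and $P_0$ to $[0:0:0:1]$. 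Since $\mathrm{Frob}_q$ cyclically permutes $P_1\to P_2\to P_3$ and fixes $P_0$, in the new coordinates $\mathrm{Frob}_q$ acts as the coordinate Frobenius composed with the cyclic shift $r_0\mapsto r_1\mapsto r_2$, $T\mapsto T$; equivalently the $\F_q$‑rational locus of $\mathcal{S}$ corresponds to the solutions of the new equation of the form $(\beta,\beta^q,\beta^{q^2})$, exactly as in Remark~\ref{remark:senso}.

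In these coordinates $\mathcal{S}$ is cut out by a cubic $F(r_0,r_1,r_2,T)$. Imposing that each of $[1:0:0:0],[0:1:0:0],[0:0:1:0]$ is a double point means that $F$ and all four partials vanish there; a direct check shows this is equivalent to the vanishing of the coefficients of every monomial $r_i^3$, $r_i^2 r_j$ and $r_i^2 T$ with $i\ne j$ in $\{0,1,2\}$. Hence
\[
F = a\, r_0 r_1 r_2 + b_{01} r_0 r_1 T + b_{02} r_0 r_2 T + b_{12} r_1 r_2 T + c_0 r_0 T^2 + c_1 r_1 T^2 + c_2 r_2 T^2 + \nu T^3 .
\]
Since $[0:0:0:1]=P_0\in\mathcal{S}$ we get $\nu=0$, and since $\mathcal{S}$ is irreducible we have $a\ne 0$ (otherwise $T\mid F$ and $\mathcal{S}$ would contain the plane $\{T=0\}$); rescaling $F$ we take $a=1$. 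Finally, stability of $F=0$ under the twisted Frobenius of the previous paragraph forces $\{b_{12},b_{02},b_{01}\}$ and $\{c_0,c_1,c_2\}$ to be orbits of length dividing $3$: putting $\beta=b_{01}$ and $\gamma=c_0$ gives $b_{12}=\beta^q$, $b_{02}=\beta^{q^2}$, $c_1=\gamma^q$, $c_2=\gamma^{q^2}$ with $\beta^{q^3}=\beta$ and $\gamma^{q^3}=\gamma$, i.e. $\beta,\gamma\in\F_{q^3}$. Dehomogenizing at $T=1$ yields precisely the claimed equation.

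I expect the main obstacle to be the bookkeeping around the Frobenius action: one must make the coordinate change $\F_{q^3}$‑rational and genuinely Frobenius‑equivariant (so that it really induces the cyclic shift $r_0\mapsto r_1\mapsto r_2$ and not some other identification), and then track the exponents of $q$ through the stability of $F$ so that the coefficients appear in the exact pattern $\beta,\beta^q,\beta^{q^2}$ and $\gamma,\gamma^q,\gamma^{q^2}$. A secondary point requiring a little care is the existence of the auxiliary point $P_0\in\mathcal{S}(\F_q)\setminus\pi$ for the smallest values of $q$; the remaining steps (the double‑point conditions, the irreducibility argument for $a\ne 0$, and the rescaling) are routine.
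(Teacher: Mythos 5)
Your proposal is correct and takes essentially the same route as the paper: normalize the three conjugate double points to the coordinate vertices of the plane at infinity together with a point of the surface at $(0:0:0:1)$, then use the double-point conditions and the Frobenius-conjugacy of $P_1,P_2,P_3$ to force the monomial pattern and the coefficient pattern $\beta,\beta^q,\beta^{q^2}$, $\gamma,\gamma^q,\gamma^{q^2}$. The only differences are cosmetic: you obtain the vanishing of the $r_i^3$, $r_i^2r_j$, $r_i^2T$ coefficients directly from the vanishing of $F$ and its partials at the vertices (the paper instead notes that the section with $T=0$ is the triangle of conjugate lines and kills the $r_i^2T$ terms by intersecting with the line through $P_1$ and $\mathcal{O}$), and you are more explicit than the paper about the existence of the $\F_q$-rational auxiliary point off the plane and the Frobenius-equivariant choice of frame.
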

	\begin{proof}
		Up to a change of projective frame we can consider the following situation
		\begin{itemize}
			\item The plane passing trough the three points is the plane at the infinity $T=0$ and the triangle of lines through them in that plane is given by $r_0$, $r_1$ and $r_2$;
			\item $\mathcal{O}=(0:0:0:1)\in\mathcal{S}$.
		\end{itemize}
		From these choices we obtain the following equation for the surface $\S$
		\[
		r_0r_1r_2+T(\alpha_0r_0^2+\alpha_1r_1^2+\alpha_2r_2^2+\beta_0r_0r_1+\beta_1r_1r_2+\beta_2r_0r_2)+T^2(\gamma_0r_0+\gamma_1r_1+\gamma_2r_2)=0
		\]
		where $\alpha_i,\beta_i,\gamma_i\in\F_{q^3}$ for $i\in\{0,1,2\}$.
		From the fact that $P_1,P_2$ and $P_3$ are conjugates it follows that $r_0,r_1$ and $r_2$ are conjugates and then we get that $\alpha_1=\alpha_0^q$, $\alpha_2=\alpha_0^{q^2}$, $\beta_1=\beta_0^q$, $\beta_2=\beta_0^{q^2}$, $\gamma_1=\gamma_0^q$ and $\gamma_2=\gamma_0^{q^2}$. Consider now the plane $\pi$ passing trough $P_1,P_2$ and $\mathcal{O}$. Without loss of generality, $P_1$ is the singular point satisfying $T=r_1=r_2=0$, then its coordinates will be $P_1=(p_1,p_2,p_3,0)$. Consider now the line, namely $s$ passing through $P_1$ and $\mathcal{O}$. A general point on that line has coordinates $P_{\lambda,\mu}=(\lambda p_0,\lambda p_1, \lambda p_2,\mu)$. Substituting the coordinates of $P_{\lambda,\mu}$ into the equation of $\S$ we obtain
		\[
		0=\alpha_0\lambda r_0^2(P_{\lambda,\mu})+\beta_0\lambda^2 r_0(P_{\lambda,\mu})=\lambda(\alpha_0r_0^2(P_{\lambda,\mu})+\beta_0\lambda r_0(P_{\lambda,\mu})).
		\]
		Now since $r_0(P_1)\ne0$ and we want $(0,\mu)$ as double solution then $\alpha_0=0$. Iterating this process the equation of the surface becomes
		\[
		r_0r_1r_2+T(\beta_0r_0r_1+\beta_0^qr_1r_2+\beta_0^{q^2}r_0r_2)+T^2(\gamma_0r_0+\gamma_0^qr_1+\gamma_0^{q^2}r_2)=0.
		\] 
	\end{proof}

	We want to reduce the problem of counting the points in the form $(\alpha,\alpha^q,\alpha^{q^2})$ on the cubic surface to the problem of counting the points in the same form on a certain quadric. To achieve the result we apply the Cremona transform, call 
	\[
	z_1:=\frac{1}{r_1}\quad z_2:=\frac{1}{r_2}\quad z_3:=\frac{1}{r_3},
	\]
	dividing the equation of the surface by $r_1r_2r_3$ we obtain 
	\[
	\mathcal{Q}:\beta z_3+\beta^qz_1+\beta^{q^2}z_2+\gamma z_2z_3+\gamma^qz_1z_3+\gamma^{q^2}z_1z_2-1=0.
	\]
	Note that if $\gamma=0$ then $\mathcal{Q}$ collapse to a plane.
	
	\begin{proposition}
		The quadric surface $\mathcal{Q}$ is absolutely irreducible.
	\end{proposition}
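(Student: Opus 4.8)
The plan is to deduce absolute irreducibility of $\mathcal{Q}$ from the irreducibility over $\overline{\F}_q$ of its defining polynomial
\[
Q(z_1,z_2,z_3)=\gamma z_2z_3+\gamma^qz_1z_3+\gamma^{q^2}z_1z_2+\beta^qz_1+\beta^{q^2}z_2+\beta z_3-1 ,
\]
which is enough since an irreducible affine hypersurface has irreducible projective closure. Throughout I assume $\gamma\neq0$: otherwise, as already noted, $\mathcal{Q}$ degenerates to a plane, a case the statement excludes. Under this assumption $Q$ has total degree exactly $2$, with homogeneous degree-$2$ part $f_2:=\gamma z_2z_3+\gamma^qz_1z_3+\gamma^{q^2}z_1z_2$.

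Next, suppose for contradiction that $Q=g\,h$ with $g,h\in\overline{\F}_q[z_1,z_2,z_3]$ non-units. Since $\deg Q=2$ and a constant factor would be a unit, necessarily $\deg g=\deg h=1$; writing $g=g_1+g_0$ and $h=h_1+h_0$ with $g_1,h_1$ nonzero linear forms and $g_0,h_0\in\overline{\F}_q$, and comparing the degree-$2$ parts, we obtain $f_2=g_1h_1$. So the whole argument reduces to showing that $f_2$ is \emph{not} a product of two linear forms.

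To prove this last point, write $g_1=a_1z_1+b_1z_2+c_1z_3$ and $h_1=a_2z_1+b_2z_2+c_2z_3$. Since $f_2$ has no $z_1^2$, $z_2^2$, $z_3^2$ terms, $a_1a_2=b_1b_2=c_1c_2=0$; a short case analysis (say $a_1=0$, then $b_1b_2=0$ splits into two subcases, each further split by $c_1c_2=0$, and symmetrically for the other starting choices) shows that in every case at least one of the monomials $z_1z_2$, $z_1z_3$, $z_2z_3$ has coefficient $0$ in $g_1h_1$; but the three coefficients of $f_2$ are $\gamma,\gamma^q,\gamma^{q^2}$, all nonzero because $\gamma\neq0$. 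This contradiction shows $f_2$ is irreducible, hence so is $Q$, and therefore $\mathcal{Q}$ is absolutely irreducible. (In odd characteristic one may replace the case analysis by noting that the Gram matrix of $f_2$ has determinant $\tfrac14\,\mathrm{N}(\gamma)\neq0$, so the conic $f_2=0$ is smooth, hence irreducible; the case analysis has the advantage of covering characteristic $2$ as well.) The only real work here is the elementary bookkeeping in the case analysis — there is no deep input, the sole arithmetic fact being that $\mathrm{N}(\gamma)=\gamma^{q^2+q+1}\neq0$.
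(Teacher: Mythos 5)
Your proof is correct and follows essentially the same route as the paper's: assume a factorization into two (affine) linear factors and compare coefficients, using the absence of square terms together with the nonvanishing of the cross-term coefficients $\gamma,\gamma^q,\gamma^{q^2}$ (i.e.\ $\mathrm{N}(\gamma)\neq 0$) to reach a contradiction. Passing first to the degree-two homogeneous part, and the optional Gram-matrix remark, are only minor repackagings of the paper's direct identity-principle argument.
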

	\begin{proof}
		If $\gamma=0$ there is nothing to prove, since $\mathcal{Q}$ is a plane. 
		Suppose $\gamma\ne0$ and that $\mathcal{Q}$ splits in the product of two planes $\pi_1$ and $\pi_2$, then
		\[
		\beta z_3+\beta^qz_1+\beta^{q^2}z_2+\gamma z_2z_3+\gamma^qz_1z_3+\gamma^{q^2}z_1z_2-1=(a_1z_1+a_2z_2+a_3z_3+a_4)(d_1z_1+d_2z_2+d_3z_3+d_4).
		\]
		From the identity principles of polynomials we get that $a_1d_1=a_2d_2=a_3d_3=0$
		Without loss of generality we can consider $a_1=a_2=d_3=0$ and then the equation becomes
		\[
		\beta z_3+\beta^qz_1+\beta^{q^2}z_2+\gamma z_2z_3+\gamma^qz_1z_3+\gamma^{q^2}z_1z_2-1=(a_3z_3+a_4)(d_1z_1+d_2z_2+d_4)
		\]
		and this cannot happen since in the right hand side of this equality we do not have the term $z_1z_2$.
	\end{proof}
	
	We want to count the points on the quadric $\mathcal{Q}$ in the form $(\delta,\delta^q,\delta^{q^2})$, where $\delta\in\F_{q^3}$. Writing down $\delta$ on the normal basis $\mathcal{B}$ we get $\delta=w_1\alpha+w_2\alpha^q+w_3\alpha^{q^2}$. Taking $w_1,w_2$ and $w_3$ as a set of variables (on $\F_q$) we obtain a $\F_q$-rational quadric surface and its $\F_q$-rational points are in one-to-one correspondence with the searched ones. 
	
	\[
	\begin{split}
	&\beta(w_1\alpha^{q^2}+w_2\alpha+w_3\alpha^q)+\beta^q(w_1\alpha+w_2\alpha^q+w_3\alpha^{q^2})+\beta^{q^2}(w_1\alpha^q+w_2\alpha^{q^2}+w_3\alpha)+\\
	&\gamma(w_1\alpha+w_2\alpha^q+w_3\alpha^{q^2})(w_1\alpha^q+w_2\alpha^{q^2}+w_3\alpha)+\gamma^q(w_1\alpha^{q^2}+w_2\alpha+w_3\alpha^q)(w_1\alpha^q+w_2\alpha^{q^2}+w_3\alpha)+\\
	&\gamma^{q^2}(w_1\alpha^{q^2}+w_2\alpha+w_3\alpha^q)(w_1\alpha+w_2\alpha^q+w_3\alpha^{q^2})-1=0.
	\end{split}
	\]
	The points we were looking for of the first surface are in one-to-one correspondence with the $\F_{q}$-rational points on the quadric surface above. It is widely known (see \cite[Section 15.3]{Hirschfeld1985}) that, in this case
	\begin{equation}
	\label{stima3punti}
	|S_1(\F_q)|= q^2+\eta q+1,\quad\eta\in\{0,1,2\}
	\end{equation}
	since the quadric surface $\mathcal{Q}$ is irreducible.

	\subsection{Four singular points}
	Call $P_1$, $P_2$, $P_3$ and $P_4$ the singular points of $\mathcal{S}_1$, applying Remark \ref{remark:singolarirazionali} we have the following possibilities:
	\begin{enumerate}[(i)]
		\item At least one among $P_1$, $P_2$, $P_3$ and $P_4$ is $\F_q$-rational;
		\item There are two couples of $\F_{q^2}$-rational and conjugates singular points.
		\item $P_1$, $P_2$, $P_3$ and $P_4$ are $\F_{q^4}$-rational and conjugates.
	\end{enumerate}
	
	If $(i)$ or $(ii)$ hold then we have already found out a good bound before respectively $|S_1(\F_{q})|\le q^2+3q-3$ and $|S_1(\F_{q})|\le q^2$, the last thing we have to do is show that $(iii)$ never holds. 
	
	\begin{proposition}
		\label{caso:singolari4}
		Case $(iii)$ never holds.
	\end{proposition}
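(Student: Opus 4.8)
The plan is to pass to the surface $\mathcal{S}_2$ of (\ref{EqSuperficie2}), rephrase case $(iii)$ as a statement about the Frobenius action on its double points, and then derive a contradiction from the rigid line--geometry of a four--nodal cubic surface. By Remark~\ref{remark:senso} the singular loci of $\mathcal{S}_1$ and $\mathcal{S}_2$ correspond under the $\F_{q^3}$--linear isomorphism $\psi$. A direct computation with the matrix $M$ shows that $\psi$ conjugates the $q$--power Frobenius $\varphi$ (acting on $\mathcal{S}_1$, which is defined over $\F_q$) to the map $\widetilde\varphi\colon(X_0,X_1,X_2)\mapsto(X_2^q,X_0^q,X_1^q)$ on $\mathcal{S}_2$, and that $\widetilde\varphi^{\,3}$ is the $q^3$--power Frobenius, i.e. the Frobenius of the field of definition $\F_{q^3}$ of $\mathcal{S}_2$. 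Hence, if case $(iii)$ held, the four double points of $\mathcal{S}_2$ would form a single $\widetilde\varphi$--orbit; since $\widetilde\varphi^{\,4}$ fixes that orbit pointwise, $\widetilde\varphi^{\,3}=\widetilde\varphi^{\,-1}$ would act on it as a $4$--cycle, so the $\F_{q^3}$--Frobenius would permute the four double points of $\mathcal{S}_2$ cyclically; recall moreover that none of them lies at infinity.

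First dispose of $\mathrm{char}\,\F_q=2$: there the equations (\ref{derivate:sup}) reduce to $X_1X_2=B$, $X_0X_2=B^q$, $X_0X_1=B^{q^2}$ together with (\ref{EqSuperficie2}), and multiplying the first three gives $(X_0X_1X_2)^2=\mathrm{N}(B)$, which forces at most one singular point if $B\neq0$ and at most three if $B=0$; so case $(iii)$ cannot occur and we may assume $\mathrm{char}\neq2$. In general, eliminating $X_0,X_1$ from (\ref{EqSuperficie2}) and the three derivative equations in (\ref{derivate:sup}) produces a non--zero polynomial in $X_2$ alone (of degree at most four, with coefficients in $\F_{q^3}$); this shows that the singular locus of $\mathcal{S}_2$ is zero--dimensional, hence consists of finitely many isolated double points -- exactly four of them in case $(iii)$. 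By the classification of singular cubic surfaces, a cubic surface with exactly four isolated double points is, over $\overline{\F}_q$ and since $\mathrm{char}\neq2$, projectively equivalent to Cayley's four--nodal cubic, which contains exactly nine lines: the six \emph{edges} joining the pairs of nodes, and three \emph{diagonals}, each meeting no node. On the other hand $\overline{\mathcal{S}_2}\cap\{X_3=0\}=\{X_0X_1X_2=0\}$ is the triangle of lines $\ell_i=\{X_i=X_3=0\}$, $i=0,1,2$: these lie on $\overline{\mathcal{S}_2}$, are defined over $\F_q$ (so each is fixed by the $\F_{q^3}$--Frobenius), and pass through none of the (finite) double points. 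As a four--nodal cubic has exactly three lines avoiding every node, $\ell_0,\ell_1,\ell_2$ must be the three diagonals.

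Now the automorphisms of Cayley's cubic permuting its four nodes form $S_4$, acting on the three diagonals through the canonical surjection $S_4\twoheadrightarrow S_3$ (kernel the Klein four--group); under it a $4$--cycle maps to a transposition. By the first paragraph the $\F_{q^3}$--Frobenius acts as a $4$--cycle on the four nodes of $\mathcal{S}_2$, so it must interchange two of the three diagonals; but it fixes each of $\ell_0,\ell_1,\ell_2$, i.e. each diagonal. This contradiction shows that case $(iii)$ never holds.

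I expect the main work to lie in the geometric input of the second paragraph: confirming that a cubic surface with exactly four isolated double points is Cayley's surface with its standard configuration of nine lines and the $S_4$--action on ``nodes versus diagonals'', uniformly for $\mathrm{char}\neq2$ -- in particular in $\mathrm{char}=3$, which is admissible because the tangent quadratic form of the third elementary symmetric polynomial $e_3(X_0,X_1,X_2,X_3)$ at each coordinate vertex is non--degenerate as soon as $\mathrm{char}\neq2$ -- and noting that $\overline{\mathcal{S}_2}$ has no line at infinity other than $\ell_0,\ell_1,\ell_2$, which is immediate. The remaining ingredients -- the elimination establishing zero--dimensionality of the singular locus (this also serves the claim made after (\ref{derivate:sup})) and the purely group--theoretic fact that a $4$--cycle maps to a transposition under $S_4\to S_3$ -- are routine.
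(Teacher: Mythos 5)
Your proposal is correct in outline, but it proves the proposition by a genuinely different route than the paper. The paper's own argument is computational: starting from the system (\ref{derivate:sup}) it computes elimination ideals (with MAGMA, details delegated to the thesis \cite{tesi}) and finds that the relevant coordinates of the singular points of $\mathcal{S}_2$ satisfy a polynomial of degree at most two over $\F_{q^3}$; hence, when four singular points exist, they lie in $\F_{q^6}$, so the singularities of $\mathcal{S}_1$ lie in $\F_{q^2}$, which is incompatible with a single Frobenius orbit of four $\F_{q^4}$-rational points. You instead transport the $q$-Frobenius through $\psi$ (your formula $\widetilde\varphi(X_0,X_1,X_2)=(X_2^q,X_0^q,X_1^q)$ is correct for the matrix $M$), dispose of characteristic $2$ by hand from (\ref{derivate:sup}), and then exploit the rigid geometry of a four-nodal cubic: since the singular points are affine, the triangle $X_0X_1X_2=0$ at infinity gives three Frobenius-fixed lines on $\overline{\mathcal{S}_2}$ avoiding every node, which must be the three diagonals of the Cayley cubic, while a $4$-cycle on the nodes maps to a transposition under $S_4\twoheadrightarrow S_3$ and so cannot fix all three diagonals. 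What your approach buys is a computer-free, conceptual proof (and extra structural information about the lines of $\mathcal{S}_2$); what it costs is exactly the input you flag: you must justify, in arbitrary odd (including $3$) characteristic, that an irreducible cubic with exactly four isolated double points has its nodes in general position, is projectively the Cayley cubic $e_3=0$, and carries exactly three node-avoiding lines corresponding to the three pairings of the nodes — this can be made elementary (no three nodes collinear and no four coplanar follow from isolatedness, and nodes at the coordinate points force the $e_3$ normal form), but as written it is cited rather than proved. The paper's computation, by contrast, is uniform in the characteristic (including $2$, so no separate case is needed) and yields the stronger statement that any singularities of $\mathcal{S}_1$ are at worst $\F_{q^2}$-rational, at the price of relying on unpublished MAGMA computations and offering less geometric insight.
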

	\begin{proof}
		In order to solve this problem we use a multivariate approach, calculating the elimination ideal with respect to all the variables less one. Consider the equations in (\ref{derivate:sup}): it is clear that, given $X_1$ and $X_2$, the value of $X_0$ is uniquely determined. For this reason we proceed with eliminating the variables $X_0$ and $X_1$ and we obtain the elimination ideal $I_{x_0,x_1}=\langle p_1,p_2\rangle$, where
		
		\[
		\begin{split}
		p_1(X_1)=&2X_1^5A^q + X_1^4B^q - 16X_1^3A^{q^2+q+1} - 8X_1^2A^{q^2+1}B^q - X_1^2B^{q^2+1} + 32X_1A{2q^2+q+2}\\&- 2X_1AB^{2q^2} - 2X_1A^{q^2}B^2 + 16A^{2q^2+2}B^q - 4A^{q^2+1}B^{q^2+1}\\
		p_2(X_1)=&(X_1^2 - 4A^{q^2+1})(X_1^4A^q + X_1^3B^q - 4X_1^2A^{q^2+q+1} + X_1^2D - 4X_1A^{q^2+1}B^q \\&+ x1B^{q^2+1} - 4A^{q^2+1}D + AB^{2q^2} + A^{q^2}B^2).
		\end{split}
		\]
		
		On the other hand, if we prooced eliminating the variables $X_0$ and $X_1$ we get the elimination ideal $J_{x_0,x_2}=\langle q_1,q_2\rangle$, where $q_1=p_1(X_2)^q$ and $q_2=p_2(X_2)^q$. The fact that the two ideals are generated by conjugate polynomials will continue to be true after symmetric annihilation of some of their terms. After further computations using the software MAGMA, which can be completely seen in \cite{tesi}, we get that one of the generators of $I_{x_1,x_2}$ is a polynomial of degree lower or equal to two, namely $f(X_1)$, and one of the generators of $J_{x_0,x_2}$ is $f(X_2)^q$. From this fact we get that the singularities of $\S_2$ are at most four and if this value is achieved then they belong (at most) to the field $\F_{q^6}$, which means that the singularities of $\S_1$ are at most in the field $\F_{q^2}$.
	\end{proof}

	\section{Case $r=3$ and $h=3$}
	Consider the case of the intersection over $\F_{q^3}$ between $\mathcal{X}$ and the curves $y=Ax^3+Bx^2+Cx+D$, $A,B,C,D\in\F_{q^3}$ and $A\ne 0$. 
	After doing similar computations to those done for the case $r=3$ and $h=2$ we arrive at an equation of a cubic surface $\widehat{\S}_1=\widehat{\S}_1(\overline{\F}_q)$ defined over $\F_q$, affinely equivalent to a surface $\widehat{\S}_2=\widehat{\S}_2(\overline{\F}_q)$ defined over $\F_{q^3}$, having equation 
	\[
	X_0X_1X_2=AX_0^3+A^qX_1^3+A^{q^2}X_2^3+BX_0^2+B^qX_1^2+B^{q^2}X_2^2+CX_0+C^qX_1+C^{q^2}X_2+E\\
	\]
	where $E=\text{T}(D)$.
	In this more general case $\widehat{\S}_1$ may be reducible, which can possibly increase the number of $\F_q$-rational points of $\widehat{\S}_1$, but on the other hand the reasonings done for $r=3$ and $h=2$ can be completely extended if $\widehat{\S}_1$ is irreducible, so we claim the following result.
	
	\begin{theorem}
		Let $r=h=3$ and consider the $\F_q$-rational cubic surface $\widehat{\S}_1$ associated to the intersections between $\mathcal{X}$ and $y=Ax^3+Bx^2+Cx+D$. If $\widehat{\S}_1$ is irreducible then
		\[
		|\widehat{\S}_1|\le q^2+7q+1.
		\]
	\end{theorem}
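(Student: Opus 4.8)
The plan is to keep the dichotomy ``$\widehat{\S}_1$ smooth'' versus ``$\widehat{\S}_1$ singular'' of Section~5, recalling that $|\widehat{\S}_1(\F_q)|$ is the number of affine intersections of $\mathcal{X}$ with $y=Ax^3+Bx^2+Cx+D$, and that, by the analogue of Remark~\ref{remark:senso}, one may pass freely between $\widehat{\S}_1$ and its $\F_{q^3}$-model $\widehat{\S}_2$. If $\widehat{\S}_1$ is smooth then, being an absolutely irreducible cubic surface over $\F_q$, it is covered directly by Theorem~\ref{Weil}, which gives $|\widehat{\S}_1(\F_q)|=q^2+\eta q+1$ with $\eta\le 7$, i.e.\ the asserted bound. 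So from now on I would assume $\widehat{\S}_1$ singular.

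The first step is to locate the singular locus of $\widehat{\S}_2$ by solving its defining equation together with its three partial derivatives, exactly as in the proof of Proposition~\ref{caso:singolari4} but with the cubic right-hand side. The same tangent-cone analysis used for $h=2$ now shows that $\widehat{\S}_2$ has a triple point precisely when $B=C=\mathrm{T}(D)=0$; in that case the equation of $\widehat{\S}_1$ becomes homogeneous, $\widetilde{\mathrm N}=\widetilde{\mathrm T}_3$, so $\widehat{\S}_1$ is the cone over an absolutely irreducible plane cubic $C\subseteq\P^2(\F_q)$ (the irreducibility of $C$ being forced by that of $\widehat{\S}_1$), and I would postpone this to the last paragraph. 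In every other case $\widehat{\S}_2$ has only double points; the elimination computation then shows they are isolated and at most four in number over $\overline{\F}_q$, so the theorem of \cite{CT1988} and the models of Section~5 are available.

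Suppose next that $\widehat{\S}_1$ is singular, is not a cone, and has an $\F_q$-rational (double) point $P$. Translating $P$ to the origin, write the affine equation as $F_2+F_3$ with $F_i$ homogeneous of degree $i$; the constant and linear parts vanish because $P$ is a double point on the surface. A line through $P$ lies on $\widehat{\S}_1$ exactly when its direction kills both $F_2$ and $F_3$, and since $\widehat{\S}_1$ is irreducible these two forms share no common factor, so by B\'ezout there are at most $6$ such lines. Sweeping the pencil of $\F_q$-rational lines through $P$---each of the $\le 6$ lines on the surface carrying $\le q-1$ further affine points, each remaining line carrying $\le 1$---gives
\[
|\widehat{\S}_1(\F_q)|\ \le\ 1+6(q-1)+\bigl(q^2+q+1-6\bigr)\ =\ q^2+7q-10\ \le\ q^2+7q+1 .
\]
This is the step where Section~5.2 must change: there the leading coefficient of the cubic restricted to a line through $P$ was $\mathrm{N}(b)$, which forces no line to lie on the surface, whereas here it is $\mathrm{N}(b)-\mathrm{T}(Ab^3)$, which may vanish, so one must use the B\'ezout count above. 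If instead $\widehat{\S}_1$ has no $\F_q$-rational singular point, then its double points are Galois-conjugate and, by Remark~\ref{remark:singolarirazionali} together with $\delta\le 4$, form a single orbit of size $2$, $3$ or $4$, or two orbits of size $2$; these are precisely the conjugate sub-cases of Sections~5.3--5.5 and are handled verbatim (the joining line of a conjugate pair lies on the surface; the Cremona transform centred at three conjugate double points reduces the count to that of an absolutely irreducible quadric; the $\F_{q^4}$-cycle is ruled out by rerunning the elimination of Proposition~\ref{caso:singolari4} with the cubic), each giving at most $q^2+2q+1$.

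It remains to treat the cone case $B=C=\mathrm{T}(D)=0$. Here $\widehat{\S}_1(\F_q)$ consists of the vertex together with the $\F_q$-rational rulings, so $|\widehat{\S}_1(\F_q)|=1+(q-1)\,|C(\F_q)|$; since $C$ is an absolutely irreducible plane cubic over $\F_q$, the Hasse-Weil bound keeps $|\widehat{\S}_1(\F_q)|$ close to $q^2$, and the remaining task is to check that the cubic $C$ occurring here---a twisted Hesse cubic attached to the norm form---does not attain a point count large enough to break the bound $q^2+7q+1$. The main difficulty beyond the case $h=2$ is thus twofold: the B\'ezout count of lines through a double point replaces the vanishing-of-the-norm argument of Section~5.2, and the genuinely new cone case must be dealt with by the curve estimate just described, the latter being the most delicate point; everything else transcribes the $h=2$ arguments with the quadratic surface equation replaced by the cubic one.
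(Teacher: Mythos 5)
Your smooth case, and your treatment of an $\F_q$-rational double point (translating $P$ to the origin, writing the equation as $F_2+F_3$, using irreducibility to make $F_2,F_3$ coprime and B\'ezout to get at most $6$ lines through $P$, then sweeping the pencil of lines) are sound, and in fact more careful than the paper, which offers no proof at all for this theorem: it simply asserts that the $h=2$ reasoning of Section~5 ``can be completely extended'' when $\widehat{\S}_1$ is irreducible. You correctly spotted the two places where that assertion is not automatic: the leading coefficient of the restriction to a line is no longer $\mathrm{N}(b)$ but $\mathrm{N}(b)-\mathrm{T}(Ab^3)$, and a triple point (cone) can now occur. The conjugate-singularity cases and the exclusion of an $\F_{q^4}$-orbit you handle only by saying the arguments of Sections~5.3--5.5 and the elimination of Proposition~\ref{caso:singolari4} ``rerun''; that is unverified, but it is exactly the level of detail the paper itself supplies.

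The genuine gap is the cone case, which you explicitly leave as ``the remaining task.'' When $B=C=\mathrm{T}(D)=0$ (and more generally whenever some translation makes the equation homogeneous, so ``precisely when'' is also too strong), $\widehat{\S}_1$ is the affine cone over an absolutely irreducible plane cubic $C$ over $\F_q$, and then $|\widehat{\S}_1(\F_q)|=1+(q-1)\,|C(\F_q)|$ with $|C(\F_q)|$ counted projectively. If $C$ is smooth this is a genus-one curve, so a priori $|C(\F_q)|$ can be as large as $q+1+\lfloor 2\sqrt{q}\rfloor$, giving roughly $q^2+2q^{3/2}$ points, which exceeds $q^2+7q+1$ as soon as $q$ is moderately large; the claimed bound holds in this case only if the specific Hesse-type cubic $\widetilde{\mathrm{N}}=\widetilde{\mathrm{T}}_3$ (equivalently $X_0X_1X_2=AX_0^3+A^qX_1^3+A^{q^2}X_2^3$ over $\F_{q^3}$) always has at most about $q+8$ rational points, and neither your proposal nor the paper gives any argument for such a trace restriction. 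So the step you yourself flag as ``the most delicate point'' is not a routine verification but the one place where the asserted inequality does not follow from anything written; as submitted, the proposal does not establish the theorem (and it raises a legitimate question about the theorem's statement in the cone case, which the paper's one-line justification simply does not address).
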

	
	\section{AG codes from the Norm-Trace curves}
	
	Consider the norm-trace curve over the field $\F_{q^3}$: since $r=3$, $\mathcal{X}$ has $N=q^{2r-1}=q^5$ $\F_{q^3}$-rational points in $\mathbb{A}^2(\F_{q^3})$. We also know that $\mathcal{L}_{\F_q}(2q^2P_\infty)=\{ay+bx^2+cx+d\,|\,a,b,c,d\in\F_{q^3}\}$. Considering the evaluation map 
	\[
	\begin{split}
	\mathrm{ev}: \,\, \mathcal{L}_{\F_{q^3}}(2q^2P_\infty)&\longrightarrow(\F_{q^3})^{q^5}\\
	f=\tilde{a}y+\tilde{b}x^2+\tilde{c}x+\tilde{d}&\longmapsto (f(P_1),\dots,f(P_N))\\
	\end{split}
	\]
	the associated one-point code will be $C_{\mathcal{L}}(D,2q^2P_{\infty})=\mathrm{ev}(\mathcal{L}_{\F_{q^3}}(2q^2P_\infty))$, where the divisor $D$ is the formal sum of all the $q^5$-rational affine points of $\mathcal{X}(\F_{q^3})$. The weight of a codeword associated to the evaluation of a function $f\in \mathcal{L}_{\F_{q^3}}(2q^2P_\infty)$ corresponds to 
	
	\[\mathrm{w}(\mathrm{ev}(f))=|\mathcal{X}(\F_{q^3})|-|\{\mathcal{X}(\F_{q^3})\cap\{\tilde{a}y+\tilde{b}x^2+\tilde{c}x+\tilde{d}=0\}\}|.\]
	
	\begin{enumerate}
		\item If $\tilde{a}=0$ then we have to study the common zeroes of $\tilde{b}x^2+\tilde{c}x+\tilde{d}$ and $\mathcal{X}(\F_{q^3})$.
		\begin{enumerate}
			\item if $\tilde{b}=\tilde{c}=\tilde{d}=0$ then $\mathrm{w}(\mathrm{ev}(f))= 0$;
			\item if $\tilde{b}=\tilde{c}=0$ and $\tilde{d}\ne0$ then $\mathrm{w}(\mathrm{ev}(f))=q^5$;
			\item if $\tilde{b}=0$ and $\tilde{c}\ne0$ then $\mathrm{w}(\mathrm{ev}(f))= q^5-q^2$;
			\item if $\tilde{c}\ne0$ and $\tilde{c}^2-4\tilde{b}\tilde{d}=0$ then $\mathrm{w}(\mathrm{ev}(f))= q^5-q^2$;
			\item otherwise $\mathrm{w}(\mathrm{ev}(f))= q^5-2q^2$.
		\end{enumerate}
		\item On the other hand, if $\tilde{a}\ne0$ then we have to study the common zeroes between $\mathcal{X}(\F_{q^3})$ and $\tilde{a}y+\tilde{b}x^2+\tilde{c}x+\tilde{d}$.
		\begin{enumerate}
			\item if $\tilde{b}=\tilde{c}=\tilde{d}=0$ then $\mathrm{w}(\mathrm{ev}(f))= q^5-1$;
			\item if $\tilde{b}=\tilde{c}=0$ and $\tilde{d}\ne0$ then $\mathrm{w}(\mathrm{ev}(f))=q^5-q^2$;
			\item if $\tilde{b}=0$ and $\tilde{c}\ne0$ then, applying Bézout theorem, we have that 
			$\mathrm{w}(\mathrm{ev}(f))~\ge~q^5~-~(q^2~+~q~+~1)$;
			\item otherwise, from what we said previously, $\mathrm{w}(\mathrm{ev}(f))\ge q^5-(q^2+7q+1)$.
		\end{enumerate}
		
	\end{enumerate}
	
	We can summarize our reasonings in the following result.
	
	\begin{theorem}
		Consider the norm-trace curve $\mathcal{X}$ over the field $\F_{q^3}$, $q\ge8$, and the AG code $C=C(D,2q^2P_{\infty})$ arising from $\mathcal{X}$, where $D=\sum_{P\in\mathcal{X}\left(\F_{q^3}\right)\setminus{P_{\infty}}}P$. Let $\{A_{\mathrm{w}}\}_{0\le \mathrm{w} \le q^5}$ be the weight distribution of $C$, then the following results hold
		\begin{enumerate}[(i)]
			\item $A_0=1$;
			\item The minimum distance of $C$ is $q^5-2q^2$;
			\item If $\mathrm{w}> q^5-2q^2$ and $A_\mathrm{w}\ne0$ then $\mathrm{w}\ge q^5-q^2-7q-1$;
		\end{enumerate}
	\end{theorem}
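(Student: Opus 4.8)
\medskip
\noindent\emph{Proof proposal.}
The plan is to derive all three assertions from the explicit list of values of $\mathrm{w}(\mathrm{ev}(f))$, for $f=\tilde a y+\tilde b x^2+\tilde c x+\tilde d\in\mathcal{L}_{\F_{q^3}}(2q^2P_\infty)$, compiled just above, together with the point-count estimates for the cubic surface $\S_1$ obtained in the previous sections. Statement~(i) is immediate: since $n=q^5>2q^2=\deg(2q^2P_\infty)$ the evaluation map is injective, so the only codeword of weight $0$ is the image of the zero function, i.e.\ $A_0=1$ (this is also case~1(a) of the list).

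Next I would organize the possible nonzero weights. When $\tilde a=0$, every vertical line $x=x_0$ with $x_0\in\F_{q^3}$ meets $\mathcal{X}$ in exactly $q^2$ affine points, because $\mathrm{N}(x_0)\in\F_q$ and $\mathrm{T}\colon\F_{q^3}\to\F_q$ is surjective with all fibres of cardinality $q^2$; hence $\mathrm{w}(\mathrm{ev}(f))$ equals $q^5$, $q^5-q^2$ or $q^5-2q^2$ according as $\tilde b x^2+\tilde c x+\tilde d$ has $0$, $1$ or $2$ distinct roots in $\F_{q^3}$, which is precisely what cases~1(b)--1(e) record. When $\tilde a\neq0$, the vanishing locus of $f$ on $\mathcal{X}$ is the intersection of $\mathcal{X}$ with $y=-\tilde a^{-1}(\tilde b x^2+\tilde c x+\tilde d)$; the point and line subcases give the values $q^5-1$, $q^5-q^2$ and, by B\'ezout with $\deg\mathcal{X}=q^2+q+1$, at least $q^5-(q^2+q+1)$, while the parabola subcase ($\tilde b\neq0$) gives $q^5-|\S_1(\F_q)|$. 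Since $\S_1$ is an absolutely irreducible cubic surface, Theorem~\ref{Weil} bounds $|\S_1(\F_q)|$ by $q^2+7q+1$ when $\S_1$ is smooth, and every singular configuration of the previous section (including the exceptional case $B=0$) gives a bound strictly below $q^2+7q+1$; so in this subcase $\mathrm{w}(\mathrm{ev}(f))\ge q^5-q^2-7q-1$.

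With this table in hand, for~(ii) the value $q^5-2q^2$ is attained, for instance by $f=x^2-x$ (so $\tilde a=0$), whose quadratic part has the two distinct roots $0,1\in\F_q\subset\F_{q^3}$, each carrying $q^2$ points, so $\mathrm{w}(\mathrm{ev}(f))=q^5-2q^2$. Conversely, every nonzero weight either lies in $\{q^5-2q^2,\,q^5-q^2,\,q^5\}$ (the $\tilde a=0$ values) or is at least $q^5-q^2-7q-1$ (the smallest of the $\tilde a\neq0$ lower bounds, since $q^5-(q^2+q+1)\ge q^5-q^2-7q-1$); and the hypothesis $q\ge8$ makes $q^2-7q-1>0$, equivalently $q^5-q^2-7q-1>q^5-2q^2$. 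Hence no nonzero weight is below $q^5-2q^2$, so the minimum distance equals $q^5-2q^2$. The same dichotomy proves~(iii): a weight strictly larger than $q^5-2q^2$ must be $q^5-q^2$, $q^5-1$, $q^5$, or one of the $\tilde a\neq0$ values, and all of these are $\ge q^5-q^2-7q-1$.

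The genuine input --- the estimate $|\S_1(\F_q)|\le q^2+7q+1$ --- is already in place from the previous sections; the only real obstacle is to check that this single clean bound holds uniformly across \emph{all} smooth and singular configurations of $\S_1$, so that it governs every parabola at once (the singular bounds being comfortably below it), after which the rest is elementary arithmetic --- notably $q^2>7q+1$, which is exactly why $q\ge8$ is needed --- and bookkeeping over the finitely many sub-cases.
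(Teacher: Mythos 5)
Your proposal is correct and follows essentially the same route as the paper: the theorem is exactly the summary of the case enumeration over $(\tilde a,\tilde b,\tilde c,\tilde d)$, with the trace-fibre count $q^2$ for the $\tilde a=0$ cases, B\'ezout for lines, the bound $|\S_1(\F_q)|\le q^2+7q+1$ (smooth case via Manin's theorem, all singular configurations lying below it) for parabolas, and the arithmetic $q^2>7q+1$ supplied by $q\ge8$. Your additions --- injectivity of the evaluation map for (i) and the explicit codeword $x^2-x$ attaining $q^5-2q^2$ --- are harmless refinements of the same argument rather than a different approach.
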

	
	In the cases $q<8$, i.e. $q=2,3,5,7$, the complete $\{A_w\}_{w\le q^5}$ can be determined with a computer and we do not report it here.
	
	\section*{Acknowledgements}
	The authors would like to thank the anonymous referee for their interesting and useful comments which permitted to improve the paper.
	This research was partially supported by the Italian National Group for Algebraic and Geometric Structures and their Applications (GNSAGA - INdAM). The results showed in this paper are included in M. Bonini’s Ph.D. thesis (supervised by the second author and G. Rinaldo).

	\begin{flushleft}
		Matteo Bonini\\
		Department of Mathematics,\\
		University of Trento,\\
		e-mail: {\sf matteo.bonini@unitn.it}
	\end{flushleft}
	
	\begin{flushleft}
		Massimiliano Sala\\
		Department of Mathematics,\\
		University of Trento,\\
		e-mail: {\sf massimiliano.sala@unitn.it}
	\end{flushleft}
	
\end{document}